\title[Finding all roots of polynomials of degree one million]
{Newton's method in practice: finding all roots of polynomials of degree one million efficiently }
\author{Dierk Schleicher}
\author{Robin Stoll}
\newtheorem{theorem}{Theorem}
\newtheorem{lemma}[theorem]{Lemma}
\newtheorem{algorithm}[theorem]{Algorithm}
\theoremstyle{definition}
\newtheorem*{remark}{Remark}
\newcommand{\eps}{\varepsilon}
\newcommand{\M}{\mathcal M}
\newcommand{\sm}{\setminus}
\newcommand{\id}{\operatorname{id}}
\newcommand{\N}{{\mathbb N}}
\newcommand{\Z}{{\mathbb Z}}
\newcommand{\C}{{\mathbb C}}
\newcommand{\disk}{{\mathbb D}}
\newcommand{\ovl}[1]{\overline{#1}}
\renewcommand{\phi}{\varphi}
\newcommand{\hide}[1]{}
\begin{document}

\begin{abstract}
We use Newton's method to find all roots of several polynomials in one complex variable of degree up to and exceeding one million and show that the method, applied to appropriately chosen starting points, can be turned into an algorithm that can be applied routinely to find all roots without deflation and with the inherent numerical stability of Newton's method. 

We specify an algorithm that provably terminates and finds all roots of any polynomial of arbitrary degree, provided all roots are distinct and exact computation is available. It is known that Newton's method is inherently stable, so computing errors do not accumulate; we provide an exact bound on how much numerical precision is sufficient.
\end{abstract}

\maketitle

\section{Introduction}

\begin{figure}[htbp]
\includegraphics[width=.55\textwidth]{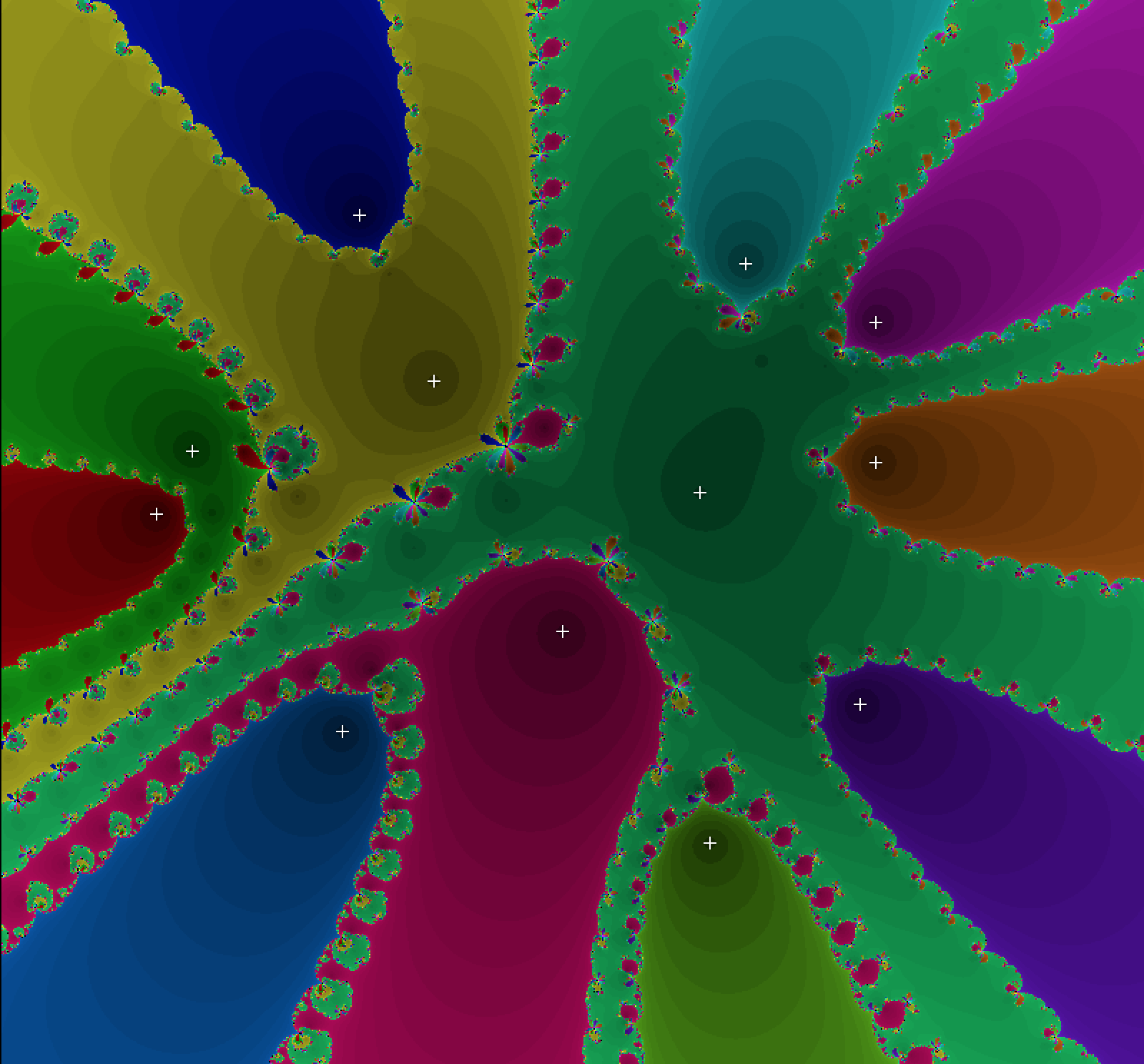}
\caption{The dynamics of Newton's method for a polynomial of degree $12$. Different colors indicate starting points that converge to different roots, and different shades of color indicate the speed of convergence to that root. }
\label{Fig:NewtonGlobal}
\end{figure}  

Finding roots of equations, especially polynomial equations, is one of the oldest tasks in mathematics; solving any equation $f(x)=g(x)$ means finding roots of $(f-g)(x)$. This task is of fundamental importance in modern computer algebra systems, as well as for geometric modelling.
Newton's method, as the name indicates, is one of the oldest methods for approximating roots of smooth maps, and in many cases it is known to converge very fast (quadratically) \emph{locally} near the roots. However, it has a reputation of being \emph{globally} unpredictable: where does one have to start the iteration, and how many iterations are required to find all roots? We are interested in the case of polynomials in a single complex variable. The global structure of the dynamical system of Newton's method is shown in Figure~\ref{Fig:NewtonGlobal} for some random polynomial of degree $12$: a priori, it seems indeed difficult to find a structural way of selecting good points from where to start the iteration.

Quite a lot of work has been done on finding roots of univariate polynomials, and various efficient methods are used in practice (for instance, finding the eigenvalues of the companion matrix). 
Besides the survey articles by McNamee \cite{McNamee,McNameeBook}, we would like to mention in particular studies and results by Pan \cite{Pan,Pan02}, Renegar \cite{Renegar}, Smale \cite{Smale}, as well as Giusti et al \cite{GLSY}, the Lindsey-Fox algorithm \cite{LindseyFox}, and especially MPSolve~3.0 from \cite{BiniRobol} and Eigensolve from \cite{Fortune}.

Our focus is on Newton's method for a polynomial in one complex variable. 
In a sequence of papers \cite{HSS,NewtonIterations,BLS,NewtonEfficient,BAS}, we showed that one can control Newton's method well enough to establish very reasonable worst-case bounds: for polynomials of degree $d$, normalized so that all roots are in the complex unit disk, there is an explicit set of $1.1d(\log d)^2$ starting points that is guaranteed to find all roots of all such polynomials \cite{HSS}; using probabilistic methods, it is even sufficient to use $O(d(\log\log d)^2)$ starting points \cite{BLS}. Moreover, the number of iterations required to find all roots with $\eps$-precision in the expected case is no more than $O(d^2\log^4 d+d\log|\log\eps|)$ \cite{NewtonEfficient,BAS}.

The first result of our study is that these theoretical bounds make it possible to find all roots of polynomials of degree $d>10^6$ as a matter of routine  on standard personal computers. One great advantage of Newton's method, besides its simplicity, is its numerical stability: errors do not accumulate, so the precision ultimately achieved does not depend on intermediate precision; we make this precise in Section~\ref{Sub:RequiredPrecision}. Moreover, no deflation is required during the process: all roots are found directly for the original polynomial. 

Our second result is a practical estimate of the complexity. While $O(d^2\log^4d)$ seems like an inviting complexity, the innocent $\log d$ factors can still become large:
when $d$ is about one million, then $\log d\approx 13.8$, and our explicit worst-case set of $1.1 d\log^2d$ starting points still has about $190\, d$ points, that is $190$ times more than the number of roots we need to find, and the number of iterations is on the order of $36\,000\, d^2$. We perform a number of case studies to find all roots of various polynomials of degree $d>10^6$ in order to show how many starting points, and how many iterations, are required in practice. It turns out that in many cases, about $4d$ starting points suffice to converge to all roots, and the number of iterations required to find any particular root is less than $d$. We prove that our method converges in the absence of (approximate) multiple roots, but requires  $O(d^2)$ iterations to find all roots (in the presence of near-multiple roots, all roots are still found, but we do not discuss here how to determine their multiplicity). One of the results of our study is that for degrees around $10^6$ and for all the polynomials studied, the constant in front of the $d^2$ is approximately $4\log 2<3$, not $(\log d)^4$ and not many thousands --- and the total computing time is about one day (or about one hour for an improved algorithm).

We do this study on several different polynomials. Some of them are motivated by applications from holomorphic dynamics, such as finding centers of hyperbolic components of the Mandelbrot set (as defined in Section~\ref{Sub:CentersHypCompsIntro}), or finding periodic points of iterated polynomials. In order to confirm that the efficiency is not a coincidence depending on specific properties coming from dynamics, we also study a few modified polynomials that do not have any particular properties --- with the exception of the one property that they can be evaluated efficiently recursively, so that we do not have to worry about issues of evaluating a million different coefficients. (In fact, some of our polynomials have coefficients of absolute value $2^{2^{19}}$ that one does not wish to have to evaluate, while the roots are still in a small disk near the origin and can be found easily by our methods). Of course, it would be desirable to extend this study to a larger set of test polynomials, and this is what we are planning to do in the near future. We view the current set of experiments as a ``proof of concept''.
 
In Section~\ref{Sec:TheAlgorithm}, we describe the algorithm, as well as the heuristics, that we employ in order to find all roots of the polynomials, and we discuss the required numerical precision. 
In Section~\ref{Sec:ThePolynomials}, we introduce the polynomials, all of degree at least one million, for which we intend to find all roots. In Section \ref{Sec:NumericalResults}, we then present the numerical results that we obtained: that is, the required number of starting points and iteration numbers in order to find all roots with given precision, together with a warranty that all roots have indeed been found. 

\looseness-1
\emph{Acknowledgements}. We are grateful to a number of friends and colleagues, 
including Marcel Oliver and especially Victor Pan and Michael Stoll, for  useful discussions, helpful advice and encouragement; the same thanks go to our graduate students, especially Khudoyor Mamayusupov and Sabyasachi Mukherjee, for interesting discussions and feedback. We are also grateful to the Chair of Computer Algebra at Bayreuth University for letting us use their computers for some of our earlier computations. Finally, we would like to express our appreciation to the two referees for their extremely helpful suggestions.

%\newpage

\section{The Algorithm}
\label{Sec:TheAlgorithm}

Our study begins with the following algorithm that comes with a detailed proof. Here and elsewhere, $D_r(q):=\{z\in\C\colon |z-q|<r\}$ denotes the open disk around $q$ with radius $r$.

\begin{theorem}[Success of Newton Iteration Algorithm]
\label{Thm:Newton}
The following algorithm will find all roots of any polynomial $p$ of degree $d$ with accuracy $\eps>0$, provided all roots have mutual distance greater than $2\eps$, when sufficient computing precision is available:
\begin{enumerate}
\item 
find a circle $C$ that surrounds all roots;
\item
for $\nu=0$, choose an arbitrary finite set $S_\nu\subset C$ and an arbitrary iteration limit $M_\nu>0$;
\item
\label{Item:NewtonStep}
apply Newton's method $N(z)=z-p(z)/p'(z)$ to iterate all points in $S_\nu$ while $|N(z)-z|> \eps/d$, but at most for $M_\nu$ iterations;
\item
if among the iterated points that started in $S_\nu$ there are $d$ points $z_1,\dots,z_d\in\C$ so that $|N(z_j)-z_j|\le\eps/d$ for all $j$ and so that all disks $D_{\eps}(z_j)$ are disjoint, then each of these disks $D_{\eps}(z_j)$ contains exactly one root of $p$;
\item
\label{Item:Refinement}
otherwise, enlarge $S_\nu$ to a set $S_{\nu+1}\subset C$ containing $S_\nu$,  choose a new iteration limit $M_{\nu+1}\ge M_\nu$ and continue in Step~\ref{Item:NewtonStep}.
\end{enumerate}
The choices in Step~\eqref{Item:Refinement} are arbitrary subject to the requirements that $\bigcup_{\nu\ge 0} S_\nu$ is dense in $C$ (i.e., it intersects every open subset of $C$) and  $M_\nu\to\infty$. 
\end{theorem}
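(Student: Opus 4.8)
I would split the argument into two independent parts: the \emph{soundness} of the stopping test in Step~4 (whenever $d$ such points are found, the $d$ disks really do isolate the $d$ roots) and the \emph{termination} of the refinement loop in Step~5 (the test is eventually passed).

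For soundness I would work with the logarithmic derivative $p'(z)/p(z)=\sum_{k=1}^{d}1/(z-a_k)$, which is meaningful precisely because the roots $a_1,\dots,a_d$ are distinct. If $|N(z)-z|=|p(z)/p'(z)|\le\eps/d$ then $\bigl|\sum_k 1/(z-a_k)\bigr|\ge d/\eps$, hence $\sum_k 1/|z-a_k|\ge d/\eps$; if every root were at distance $\ge\eps$ from $z$ this sum would be at most $d/\eps$, and equality would force every $|z-a_k|$ to equal $\eps$ and every $1/(z-a_k)$ to point in the same direction, which is impossible for distinct roots. So the root nearest to $z$ lies in $D_\eps(z)$, and a short sharpening — estimate $|z-a_k|\ge|a_i-a_k|-|z-a_i|>2\eps-|z-a_i|$ for $k\ne i$ and feed in the hypothesis that the roots are more than $2\eps$ apart — upgrades this to $|z-a_i|<2\eps/d$. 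Consequently each of the $d$ pairwise disjoint disks $D_\eps(z_j)$ in Step~4 contains at least one root, and since $p$ has exactly $d$ (distinct) roots, each of them contains exactly one.

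For termination I would invoke the structural facts about Newton dynamics (see \cite{HSS}): the immediate basin $B_i^{*}$ of each root $a_i$ is open, connected, forward invariant, and unbounded, and $a_i$ is a superattracting fixed point of $N$. Since $C$ surrounds all roots it separates $a_i$ from $\infty$, so the connected unbounded set $B_i^{*}$ must meet $C$, and, being open, it meets $C$ in a set containing a nonempty open arc $A_i$. Density of $\bigcup_{\nu\ge 0}S_\nu$ in $C$ together with the nesting $S_\nu\subset S_{\nu+1}$ yields a stage $\nu_1$ with $S_{\nu_1}\cap A_i\ne\emptyset$ for every $i$; fix $s_i\in S_{\nu_1}\cap A_i$. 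The orbit of $s_i$ stays in $B_i^{*}$ and converges to $a_i$. Choose $r_i>0$ so small that $D_{r_i}(a_i)$ is forward invariant and $|N(z)-z|=|p(z)/p'(z)|\le\eps/d$ on all of $D_{r_i}(a_i)$ (possible since $p'(a_i)\ne0$); then the orbit of $s_i$ enters $D_{r_i}(a_i)$ after some finite number $m_i$ of steps. Because $M_\nu\to\infty$, for all large $\nu$ we have $M_\nu\ge\max_i m_i$ and $S_\nu\supseteq S_{\nu_1}$, so the iteration of each $s_i$ halts at a point $z_i$ with $|N(z_i)-z_i|\le\eps/d$; the $z_i$ then lie near $d$ distinct roots, their disks $D_\eps(z_j)$ are disjoint by the separation hypothesis, and Step~4 succeeds.

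The step that will need the most care is the interplay between the stopping threshold $\eps/d$, the disk radius $\eps$, and the separation $2\eps$. Two things must be controlled. First, the iteration of $s_i$ might halt \emph{before} reaching $D_{r_i}(a_i)$, at an earlier iterate where $|N(z)-z|\le\eps/d$ merely happens to hold; by the soundness estimate such an iterate still lies within $2\eps/d$ of some root, but one must rule out that it sits near the \emph{wrong} root — here I would use forward invariance of $B_i^{*}$, disjointness of the immediate basins, and the fact that a sufficiently small neighbourhood of a root lies inside its own immediate basin (or argue more globally that the full family of halting iterates must meet every root's neighbourhood). Second, one must verify that $d$ iterates lying within $2\eps/d$ of $d$ distinct roots that are more than $2\eps$ apart really do have pairwise disjoint $\eps$-disks — this is exactly where the quantitative slack between $\eps/d$ and $\eps$ is consumed, and in the tightest configurations it may call for a direction argument (a point squeezed between two nearby roots has a large, not a small, Newton displacement, so it cannot be a halting iterate). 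Once these quantitative points are pinned down, the remainder is soft: plane topology for the arcs $A_i$, density and monotonicity of the $S_\nu$, and convergence at a superattracting fixed point.
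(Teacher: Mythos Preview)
Your decomposition into soundness of Step~4 and termination of the refinement loop, together with the tools you use for each, matches the paper's proof exactly: for soundness the estimate $|z-\alpha|\le d\,|N(z)-z|$ (which you re-derive from the logarithmic derivative $p'/p=\sum_k 1/(z-a_k)$; the paper simply cites it as \cite[Lemma~5]{NewtonIterations}), and for termination the fact from \cite{HSS} that every immediate basin is open and unbounded, hence meets $C$ in a nonempty open arc, combined with density of $\bigcup_\nu S_\nu$ and $M_\nu\to\infty$.

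Where you depart from the paper is in care, not strategy. Your sharpening $|z-a_i|<2\eps/d$ is correct (it follows from the quadratic inequality $1/x+(d-1)/(2\eps-x)>d/\eps$ having roots $x=2\eps/d$ and $x=\eps$) but the paper does not use it; the paper is content with $|z-\alpha|\le\eps$. More interestingly, the concern you isolate in your third paragraph --- that the orbit of $s_i\in B_i^{*}$ might meet the halting criterion $|N(z)-z|\le\eps/d$ \emph{prematurely}, at a point near some $a_k\ne a_i$, so that the produced halting iterate sits near the wrong root --- is a genuine soft spot, and the paper's proof simply elides it: it writes ``each root $\alpha_j$ has at least one $z_j\in S_\nu$ that converges to $\alpha_j$ \dots\ so the algorithm terminates in finite time'', tacitly identifying ``the full orbit converges to $\alpha_j$'' with ``the halting iterate lies near $\alpha_j$''. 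The paper does flag exactly this as a ``theoretical difficulty'' in Section~\ref{Sub:StoppingCriterion}, after the proof, and asserts that requiring $M_\nu\to\infty$ handles it, but gives no argument. Your proposed remedy --- combine your $2\eps/d$ bound with an inclusion $D_\rho(a_k)\subset B_k^{*}$ and forward invariance --- is the natural line; closing it under only the hypothesis that the roots are more than $2\eps$ apart (and likewise verifying disjointness of the $\eps$-disks when the separation is only barely above $2\eps$) needs a quantitative step that neither you nor the paper supplies. In short: same route as the paper, and you are more scrupulous about a point the paper leaves underargued.
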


\begin{remark}
We would like to point out that no deflation is needed in the process: all roots are found by Newton's method applied directly to the original polynomial. 

We should also emphasize that the condition stated in the end of the theorem, that the set $\bigcup_\nu S_\nu$ be dense in the circle $C$, is only of theoretical nature: indeed, we show in Section~\ref{Sec:NumericalResults} that empirically $4d$ (or sometimes $8d$) equidistributed points in $C$ suffice to find all roots (and we prove in \cite{HSS} that $2.5 d^{3/2}$ equidistributed points on $C$ always suffice). Reasonable values of $M_\nu$ are discussed in Section~\ref{Sub:StoppingCriterion}; in our experiments we usually used $M_\nu=10d$.   

The reason why in Step \eqref{Item:NewtonStep} we need to iterate while $|N(z)-z|>\eps/d$, rather than $\eps$, comes from our warranty in Equation\eqref{Eq:SomeRootNearby} that all roots are indeed found; see also Section~\ref{Sub:Refining}.

It is relatively easy to modify the theorem when the mutual distance between the roots is less than $2\eps$, as long as all roots are distinct (or have, practically speaking, mutual distance significantly greater than the computational accuracy); some relevant remarks are given in Section~\ref{Sub:A-Posteriori}. The general case, allowing multiple roots, will be discussed in \cite{NewtonAlgorithm}.

The amount of computing precision required is very moderate: Newton's method is stable in the sense that moderate computing imprecision is tolerated and corrected along the way; the desired accuracy is required only in the last iterations. 
More precisely, if $N^{\circ n}(z_j)\to \alpha_j$ as $n\to\infty$, then there is an $\eps^*>0$ so that every $\eps^*$-pseudo-orbit near the orbit of $z_j$ will still converge to $\alpha_j$ with approximately the same speed; this means the following: if we set $z_{j,0}:=z_j$ and $z_{j,n+1}:=N(z_{j,n})+\delta_n$ with arbitrary $\delta_n\in D_{\eps^*}(0)$ (modeling finite computational errors), then $z_{j,n}\in D_{\eps^*}(\alpha_j)$ for large $n$.
See Section~\ref{Sub:RequiredPrecision} for details.

Finally, we should mention that a significant amount of work has been done on finding an appropriate circle $C$; see for instance the survey \cite{McNameeOlhovsky} (or \cite[Sec.~1.10]{McNameeBook}).  
\end{remark}

To prove our theorem, we need to fix some terminology. The \emph{basin of a root $\alpha$} is $\{z\in\C\colon N^{\circ n}(z)\to \alpha \text{ as $n\to\infty$}\}$. The \emph{immediate basin} of the root $\alpha$, usually denoted $U_\alpha$, is the connected component of the basin that contains the root. A \emph{channel} of the immediate basin $U_\alpha$ is an unbounded connected component of $U_\alpha\sm C$, where $C$ is the given circle that surrounds all roots.

\begin{proof}
By \cite[Proposition~5]{HSS}, every immediate basin is unbounded, so if $C$ is any circle that surrounds all the roots, then $C$ must intersect every immediate basin and even (the closure of) every channel of the immediate basin in an open set. In particular, every dense subset of $C$ must intersect the immediate basin of every root.

In fact, one of the channels must be thick enough so that it intersects $C$ in at least one arc segment $J$ of arc length $0.4 d^{-3/2}$: that is, the ratio of the length of $J$ and the total length of $C$ is at least $0.4d^{-3/2}$ \cite[Section~8]{HSS}. 

It follows that $\lceil 2.5\, d^{3/2}\rceil$ equidistant points on $C$ will intersect every immediate basin, so that these points will find all roots under the Newton iteration. For every compact sub-interval $J_0$ of $J$, the number of iterations to find the given root with accuracy $\eps>0$ is bounded above. 

Given such a compact sub-interval $J_0$ of length $\delta>0$, let $M=M(J_0)$ be this required number of iterations. Then for sufficiently large $\nu$, we have $M_\nu\ge M$ and $S_\nu$ intersects $J_0$. 
Therefore, the starting points in $S_\nu$ will converge to all the roots of $p$. 

For every $z\in\C$ there is at least one root of $p$, say $\alpha$, that satisfies 
\begin{equation}
|z-\alpha|\le d|N(z)-z|
\label{Eq:SomeRootNearby}
\end{equation} 
\cite[Lemma~5]{NewtonIterations}, so if $|N(z)-z|\le \eps/d$, then $|z-\alpha|\le \eps$ for some root $\alpha$, hence $z$ has found some root with desired accuracy. If the mutual distance between any two roots exceeds $2\eps$, then there is a unique root $\alpha$ with $|z-\alpha|\le\eps$, so the orbit through $z$ has found the root $\alpha$ without ambiguity. For sufficiently large $\nu$, each root $\alpha_j$ has at least one $z_j\in S_\nu$ that converges to $\alpha_j$ (we mentioned above that every dense subset of $C$ intersects $U_{\alpha_j}$), so the algorithm terminates in finite time.
\end{proof}

There are very reasonable a-priori-estimates available on the required number of points in $S_\nu$, as well as the maximal number of iterations $M_\nu$. As mentioned above, one can achieve $|S_\nu|\le 1.1 d(\log d)^2$ when placing the points in $S_\nu$ on $O(\log d)$ circles, and $|S_\nu|\le 2.5 d^{3/2}$ in case of a single circle. A non-deterministic set of starting points can even have $|S_\nu|=O(d(\log\log d)^2)$ \cite{BLS}. The required number of iterations for all Newton orbits combined can be bounded under reasonable assumptions on the distribution of the roots by $O(d^3 \log^3 d + d \log | \log\eps|)$ \cite{NewtonEfficient} or even by $O(d\log^4d+d\log|\log\eps|)$ \cite{BAS}. These ``reasonable assumptions'' include the case that all roots are uniformly and independently distributed in some disk, or even on some analytic arc, or that the coefficients are randomly distributed (subject to our standing hypothesis that all roots are within some disk); compare for instance \cite{ErdoesTuran,Arnold} --- but of course we are acutely aware of the fact that there are important classes of polynomials that do not satisfy any such assumptions. 

All these are a-priori estimates in the worst case, some of them under the assumption that the roots are reasonably equidistributed.
One of the key observations in our study is that reality tends to be nicer than the worst case bounds, and in practice one can use a very moderate number of starting points on a single circle: run-time estimates (depending on the progress made so far) allow the algorithm to terminate much faster. We will show in Section~\ref{Sec:NumericalResults} that even for polynomials of degree $d>10^6$, usually no more than $4d$ starting points were needed to converge to all roots (in one case $8d$), and usually less than $3d^2$ iterations were required to find all of them with precision $\eps$ (once $6.7d^2$ iterations were needed). The required precision $\eps$ does not show up in these results because once the roots were found with low precision, convergence is very fast to achieve high precision, so the complexity terms that depend on $\eps$ are in practice dominated by other terms.

\medskip

We now discuss the relevant parts of the algorithm.

\subsection{Starting points}

The initial circle $C$ depends on available information of the polynomial $p$ at hand. (In the absence of any bound on the roots of $p$, no finite set of starting points can find all roots: given any finite set $S$, then there is an $r>0$ with $S\subset D_r(0)$, and if $p$ is any polynomial and $\alpha$ is one of its roots, then there is a disk $D_\rho(\alpha)\subset U_\alpha$ and a conformal isomorphism $T\colon \C\to\C$ with $T(D_r(0))=D_\rho(\alpha)$, and the polynomial $p\circ T$ has a root at $0$ and its immediate basin contains $D_r(0)$ and hence $S$.)

Above, we gave some references on how to find an appropriate circle $C$.
In our examples, the choice of $C$ was in all cases minimal with the property that it is known that it surrounds all roots; no ``safety factor'' was used (for the theoretical bounds, we only have good estimates when the circle used is larger by a definite factor than the smallest circle surrounding all roots). The price to pay is that we have weaker bounds on the thickness of channels (i.e., on the length of the arcs in which the immediate basins intersect $C$), so possibly we might need more starting points in order to find all roots. In practice, the increased speed of iterations seems to more than compensate for this problem.

We use the following algorithm to determine the set of starting points; the choices in it are covered by the allowed choice in Theorem~\ref{Thm:Newton}. 

\begin{algorithm}[Starting points for Newton's method]
\label{Algo:Dyadic}
Choose any circle $C$ that surrounds all roots, and place a set of starting points on them as follows: parametrize the circle as $C\approx [0,1]/(0\sim 1)$ by arc length (i.e., counting angles in full turns). The starting point at generation $0$ has angle $0$, the point at generation $1$ has angle $1/2$, and in general the $2^{\nu-1}$ starting points at generation $\nu$ have angles $a/2^\nu$ for odd integers $a$. 
\end{algorithm}

Therefore, up to and including the points of any generation $\nu$ there are $2^\nu$ equidistant points on $C$. These have the following coordinates in $\C$: if $C=\{z\in\C\colon |z-q|=r\}$ for some $q\in\C$ and $r>0$, then $S_\nu=q+r\exp(2\pi i \Z/2^\nu)$ (of course, by periodicity of $\exp$, it suffices to replace $\Z$ by the finite set $\{0,1,2,\dots, 2^\nu-1 \}$). The sequence of points on $C$ is known as a \emph{van der Corput sequence}.

Since $d$ roots are to be found in any case, one might as well start with $d$ equidistant points on $C$ and then subdivide by a factor of $2$ as necessary. In our case, $d$ was always a power of $2$, so this makes no difference at the end of the day, and our recursive method is slightly more convenient to implement.

\subsection{Stopping Criterion}
\label{Sub:StoppingCriterion}
Consider a polynomial $p$ of degree $d$ and its Newton map $N=N_p=\id-p/p'$. 
Given a starting point $z_0$ with Newton orbit $z_n:=N^{\circ n}(z_0)$, in every iteration we know that some root $\alpha$ has the property that 
$|z_n-\alpha|\le d|z_{n+1}-z_{n}|$ 
\cite[Lemma~5]{NewtonIterations} (we will prove a more general result in Lemma~\ref{Lem:ImprovedBoundRoot}).
Among the theoretical difficulties is the fact that, when we know that the orbit $(z_n)$ is in the immediate basin of some root $\tilde\alpha$, the nearby root $\alpha$ guaranteed in \eqref{Eq:SomeRootNearby} need not be $\tilde\alpha$. If the iteration stops at $z_n$ with the claim that ``some root $\alpha$ is within the disk $D_{d|z_{n+1}-z_n|}(z_n)$'', then if $\tilde\alpha\neq\alpha$, it might be that no orbit finds $\tilde\alpha$. Here is an actual example where this problem might occur: set $p(z)=z(z^{d-1}-1)$; then one root is at $z=0$ and $d-1$ further roots are equidistributed on the unit circle. Newton orbits that start with $|z_0|>1$ will initially move towards $\partial\disk$ and most of them will converge to roots of unity, while some will pass between these roots of unity and eventually converge to $0$. If $d$ is very large, then even the orbits in the immediate basin of $0$ will move very slowly near $|z|=1$, and one must make sure not to terminate the iteration early (see Figure~\ref{Fig:RootsUnity} for the Newton dynamics in this case). 

\begin{figure}
\includegraphics[width=.8\textwidth]{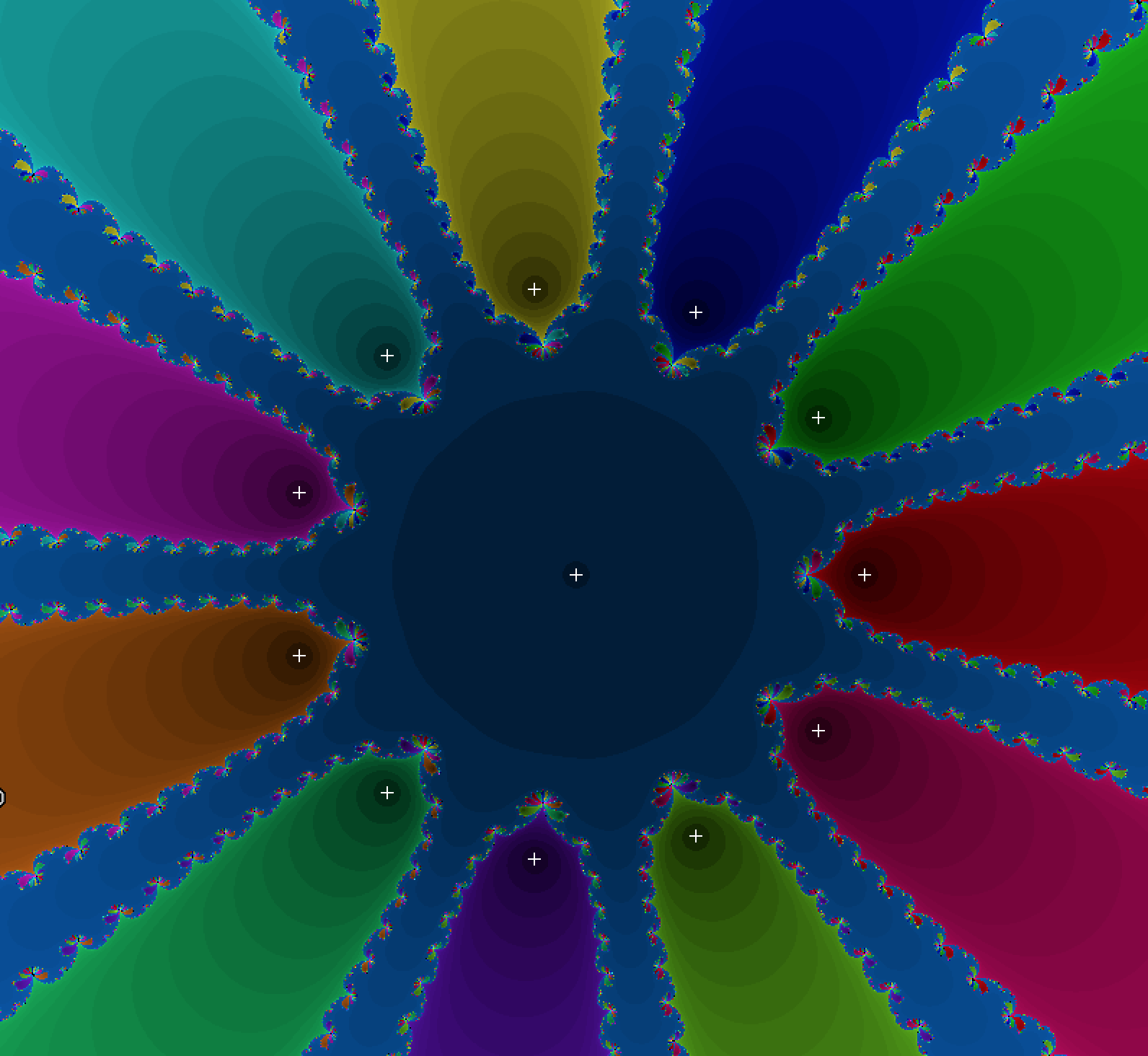}
\caption{The Newton dynamics for the polynomial $p(z)=z(z^{d-1}-1)$ with $d=12$. The root $z=0$ has one channel between any pair of adjacent roots on $\partial\disk$, but these channels are thin, and the Newton dynamics is slow near $|z|=1$. }
\label{Fig:RootsUnity}
\end{figure}

Our algorithm in Theorem~\ref{Thm:Newton} handles this problem by requiring $M_\nu\to\infty$. However, this is a worry that does not strike in practice, and we impose a global maximal number of iterations for any particular orbit, say $M_\text{fail}$. We therefore iterate all our starting points of any given generation until one of the following occurs:
\begin{description}
\item[success] we have $|z_{n+1}-z_n|\le \eps_\text{success}$, where $\eps_\text{success}$ is a predefined accuracy threshold for success; or
\item[failure] we have $n>M_{\text{fail}}$, where $M_\text{fail}$ is the largest number of allowed iterations before the algorithm gives up.
\end{description}

We use pragmatic values for both quantities. The machine data format used on our computers is \texttt{long double} with internal computational accuracy better than $10^{-18}$, and we set $\eps_\text{success}=10^{-16}$. (In fact, in some of our test runs we used $\eps_\text{success}=10^{-18}$, and in this case almost all roots were found easily, while for one polynomial of degree $2^{12}$ just two of the $2^{12}$ different roots stubbornly refused to be found. The reason was that the internal computational precision was not sufficient so as to ever reach $|z_{n+1}-z_n|<10^{-18}$, so the ``success'' case never occurred.) Since convergence in the end is very fast, in theory there is no problem in using small values of $\eps_\text{success}$. In practice, if $\eps_\text{success}$ is too small for the available computational accuracy, there are problems, and our experience suggests that one should start with relatively modest values of $\eps_\text{success}$ and then increase the required accuracy along the way. 

The largest number of allowed iterations before failure is declared must depend on $d$. If $z$ is far from the circle $C$ containing all the roots, then denoting the center of $C$ by $q$, one has $|N(z)-q|\approx ((d-1)/d) |z-q|$. Thus after $d$ iterations, one has $|N(z)-q|\approx |z-q|/e$, so when starting outside of $C$, in order to get closer to $C$ by a constant factor $f$, one needs approximately $d\log f$ iterations before even the disk with the roots is reached. This is why our method of starting outside of the circle $C$, where we have good control on the starting points, will always require at least $O(d^2)$ Newton iterations (if all orbits are started from the same circle; an improved algorithm might do the refinement from $S_\nu$ to $S_{\nu+1}$ after many iterations, not on the initial circle). 

We used the value $M_\text{fail}=10d$ (we will explain below that the precise value made little difference because the failure case was reached very rarely as long as $\eps_\text{success}$ and $M_\text{fail}$ had reasonable values: a few orbits found attracting cycles of higher periods, and it pays off to check for such cycles of bounded periods).

\subsection{A-Posteriori-Estimates: Counting the Number of Roots Found}
\label{Sub:A-Posteriori}

We iterate all the starting points in any generation until each of them either finds a root (with $|N(z)-z|\le \eps_\text{success})$ or gives up (after $M_\text{fail}$ iterations). At the end of the generation, all the successful Newton orbits come with disks that contain at least one root, by \eqref{Eq:SomeRootNearby}. Of course, often several orbits find the same roots. The number of disks that are disjoint provides a lower bound for the number of roots found.

For explicit reference, we state a precise success criterion that we used earlier on:
\begin{theorem}[Warranty That All Roots Found]
Let $p$ be a polynomial of degree $d$ and suppose there are $d$ points $z_1,\dots,z_d$ with the property that all $d$ closed disks $\ovl{D_{r_i}(z_i)}$ are disjoint, with $r_i=d|z_i-N_p(z_i)|$. Then each of these $d$ disks contains one and only one root of $p$. 
\end{theorem}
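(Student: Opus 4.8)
The plan is to combine the a-priori inclusion \eqref{Eq:SomeRootNearby} with an elementary counting argument. First I would apply \eqref{Eq:SomeRootNearby} (that is, \cite[Lemma~5]{NewtonIterations}) to each of the points $z_1,\dots,z_d$ individually: for each $i$ there is \emph{some} root $\alpha_i$ of $p$ with $|z_i-\alpha_i|\le d\,|z_i-N_p(z_i)|=r_i$, hence $\alpha_i\in\ovl{D_{r_i}(z_i)}$. So each of the $d$ closed disks contains at least one root of $p$.

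Next I would invoke the hypothesis that the disks $\ovl{D_{r_i}(z_i)}$ are pairwise disjoint. This forces the roots $\alpha_1,\dots,\alpha_d$ to be pairwise distinct, since for $i\neq j$ the points $\alpha_i$ and $\alpha_j$ lie in disjoint sets. Therefore $p$ has at least $d$ distinct roots; but a nonzero polynomial of degree $d$ has at most $d$ roots counted with multiplicity, so $p$ has exactly $d$ roots, all of them simple, and $\{\alpha_1,\dots,\alpha_d\}$ is the complete list of roots of $p$.

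Finally I would read off the ``one and only one'' statement: the disk $\ovl{D_{r_i}(z_i)}$ contains $\alpha_i$, and it cannot contain any $\alpha_j$ with $j\neq i$ because $\alpha_j\in\ovl{D_{r_j}(z_j)}$ and the disks are disjoint; since the $\alpha_k$ exhaust the roots of $p$, the root $\alpha_i$ is the unique root of $p$ in that disk.

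I do not expect a real obstacle here; the one point that deserves a word of care is that \eqref{Eq:SomeRootNearby} only produces \emph{some} root near $z_i$, not necessarily the one the Newton orbit through $z_i$ is converging to --- which is precisely why the disjointness hypothesis is both needed and sufficient. If one prefers to argue with multiplicities rather than with ``at most $d$ distinct roots'', one could instead apply the argument principle on each boundary circle $\partial D_{r_i}(z_i)$, but that would require placing a root in the \emph{open} disk or controlling the boundary, so the pigeonhole argument above is the cleaner route.
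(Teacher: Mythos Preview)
Your argument is correct and is exactly the paper's approach, just spelled out in more detail: the paper's proof is the two-line version ``each disk contains at least one root by \eqref{Eq:SomeRootNearby}; these disks are disjoint and the total number of roots is $d$'', which is precisely your pigeonhole argument compressed.
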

\begin{proof}
Each disk $\ovl{D_{r_i}(z_i)}$ contains at least one root by \eqref{Eq:SomeRootNearby}. These disks are disjoint, and the total number of roots is $d$.
\end{proof}

If all the roots are different (and in practice, their mutual distance is large enough in terms of the available accuracy of computation), then eventually enough disjoint disks will be found so that all roots can be discovered and separated. This is the expected case for ``random'' polynomials. The worst case (including multiple roots) is treated in \cite{NewtonAlgorithm}. Note that from a numerical point of view, a multiple root is indistinguishable from several roots that are very close to each other.  Practically speaking, our algorithm declares success once enough different roots are found: and this declared success comes with a guarantee that it is correct, so that any optimistic assumptions made earlier are justified a posteriori.

As additional confirmation, we employed a few Viete tests on the roots found: if $p$ is any polynomial of degree $d$ and normalized (i.e.\ the leading coefficient is $1$), then the sum of all roots of $p$ must be equal to the negative of the coefficient of degree $d-1$. Moreover, the product of the roots must be equal to $(-1)^d$ times the constant coefficient. If one root vanishes, then the product of the remaining $d-1$ roots is equal to $(-1)^{d-1}$ times the linear term. These tests can be verified very easily (in principle, one can perform tests also on all other coefficients, but these are computationally more expensive and thus impractical). 

\subsection{Refining the Search}
\label{Sub:Refining}

%\looseness-1
If, after any generation $\nu$, not all roots are found, then the number of starting points is doubled in the next generation, and the Newton iteration is started again. According to our algorithm, the number $M_\nu$ of allowed iterations should also be increased. In practice, with $M_\nu=10d$, every orbit found some roots with fewer iterations (unless it converged to a cycler of higher period, and all roots were found by other starting points), so we used this constant value of $M_\nu$. 

The only possibility for our algorithm to fail is when some roots are too close to each other to be distinguished by our simple test in \eqref{Eq:SomeRootNearby}. The problem will not be resolved by additional starting points or longer iteration. One way is to compute explicitly how many roots a particular disk contains; compare \cite{NewtonAlgorithm}. However, looking at the proof of \eqref{Eq:SomeRootNearby} shows that in many cases a better estimate is available if some of the roots are known.

\begin{lemma}[Improved Bound on Distance to Root]
\label{Lem:ImprovedBoundRoot}
Let $p$ be a polynomial of degree $d$ and suppose, for some $k\le d$, the roots $\alpha_1,\dots,\alpha_k$ of $p$ are known. Then for any $z\in\C$ there is at least one root in $\ovl{D_r(z)}$ for 
\begin{equation}
r=(d-k)\left(\frac{1}{|N(z)-z|}-\sum_{j=1}^k \frac{1}{|z-\alpha_j|}\right)^{-1}
\;.
\label{Eq:ImprovedRootDistance}
\end{equation}
\end{lemma}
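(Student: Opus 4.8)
The plan is to rerun the argument behind \eqref{Eq:SomeRootNearby}, but instead of throwing away the contributions of the known roots, to keep them as an explicit correction term. Factor $p(z)=c\prod_{j=1}^d(z-\alpha_j)$ so that $\alpha_1,\dots,\alpha_k$ are among the $\alpha_j$, and use the logarithmic-derivative identity that underlies \cite[Lemma~5]{NewtonIterations},
\begin{equation*}
\frac{1}{z-N(z)}=\frac{p'(z)}{p(z)}=\sum_{j=1}^d\frac{1}{z-\alpha_j}\;.
\end{equation*}
I would then split the right-hand sum into the known and the unknown roots and solve for the unknown part:
\begin{equation*}
\sum_{j=k+1}^d\frac{1}{z-\alpha_j}=\frac{1}{z-N(z)}-\sum_{j=1}^k\frac{1}{z-\alpha_j}\;.
\end{equation*}

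Next I would take absolute values on both sides. On the right, the reverse triangle inequality together with the ordinary triangle inequality for the finite sum over the known roots gives
\begin{equation*}
\left|\sum_{j=k+1}^d\frac{1}{z-\alpha_j}\right|\ge\frac{1}{|N(z)-z|}-\sum_{j=1}^k\frac{1}{|z-\alpha_j|}\;.
\end{equation*}
On the left, let $\alpha\in\{\alpha_{k+1},\dots,\alpha_d\}$ be a root closest to $z$ among the unknown ones; then $|z-\alpha_j|\ge|z-\alpha|$ for all $j>k$, so $\left|\sum_{j=k+1}^d 1/(z-\alpha_j)\right|\le\sum_{j=k+1}^d 1/|z-\alpha_j|\le (d-k)/|z-\alpha|$. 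Chaining the two estimates yields $(d-k)/|z-\alpha|\ge 1/|N(z)-z|-\sum_{j=1}^k 1/|z-\alpha_j|$, and (when the right-hand bracket is positive) inverting gives $|z-\alpha|\le r$ with $r$ exactly as in \eqref{Eq:ImprovedRootDistance}. Since $\alpha$ is a root of $p$, this is the assertion.

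The only place where any care is needed — and it is bookkeeping rather than a real obstacle — is the handling of degenerate cases: if $p'(z)=0$ so that $N(z)$ is undefined, if $z$ happens to be one of the known roots $\alpha_j$, or if the bracketed quantity $1/|N(z)-z|-\sum_{j=1}^k 1/|z-\alpha_j|$ is zero or negative, then the final ``invert and keep the inequality'' step is not licit. In each of these cases one declares $r=+\infty$, equivalently $\ovl{D_r(z)}=\C$, and the conclusion holds trivially because $p$ has some root. It is worth remarking, for the use made of this lemma, that as soon as $k\ge1$ and $z$ is already reasonably close to the remaining roots, the subtracted sum is strictly positive, so the radius $r$ in \eqref{Eq:ImprovedRootDistance} is genuinely smaller than the crude radius $d|N(z)-z|$ of \eqref{Eq:SomeRootNearby} — which is precisely the improvement the lemma is meant to provide.
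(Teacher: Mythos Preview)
Your proof is correct and follows essentially the same route as the paper: both arguments use the logarithmic-derivative identity $p'(z)/p(z)=\sum_j 1/(z-\alpha_j)$, split off the $k$ known terms, and bound the remaining $d-k$ terms by $(d-k)/c$ where $c$ is the distance to a nearest root. The paper's version is terser (it takes $c$ as the minimum over \emph{all} roots and applies the triangle inequality directly to the full sum), whereas you isolate the unknown-root sum first and thereby actually show the slightly sharper fact that some \emph{unknown} root lies in $\ovl{D_r(z)}$; your handling of the degenerate cases is also more explicit than the paper's, but none of this changes the substance of the argument.
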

\begin{proof}
Let $c:=\min_{j\in\{1,2\dots,d\}}|z-\alpha_j|$. Then 
\[
\frac{1}{|N(z)-z|} = 
\frac{p'(z)}{p(z)}\le \sum_{j=1}^k \frac{1}{|z-\alpha_j|} + \frac{d-k}{c} 
\]
and thus 
\[
c \le (d-k) \left(\frac{1}{ |N(z)-z|}-\sum_{j=1}^k \frac 1 {|z-\alpha_j|} \right)^{-1} =r
\;.
\]
Therefore, at least one root is contained in $\ovl{D_r(z)}$.
\end{proof}

Observe that, if $k=0$, then we regain the old bound \eqref{Eq:SomeRootNearby}. 

As a specific example, suppose we have a degree $d$ polynomial $p$ and  a point $z\in\C$ so that $|N(z)-z|<\eps$. The bound \eqref{Eq:SomeRootNearby} says that there is some root $\alpha$ with $|\alpha-z|<d\eps$. Now suppose that the positions of $.9d$ of all roots are known as follows: $0.5d$ roots $\alpha_j$ satisfy $|z-\alpha_j|>1$, a further $0.3d$ roots satisfy $|z-\alpha_j|>1/10$, 
and the remaining $0.1d$ known roots satisfy $|z-\alpha_j|>10^{-3}$. Then by \eqref{Eq:ImprovedRootDistance}, the distance from $z$ to the nearest root is at most
\begin{align*}
&0.1d (1/\eps - 0.5d - 0.3d \cdot 10 - 0.1d \cdot 10^{3} )^{-1}
\\
= \,\, & 0.1d\eps(1-0.5d\eps -3d\eps  -100d\eps)^{-1} 
\approx 0.1d\eps (1+103.5 d\eps) 
\end{align*}
when $d\eps\ll 1$ (which is realistic when $z$ is already near some root). In fact, in typical cases such as $d=10^6$ and $\eps=10^{-14}$, the bound for the nearest root is about $0.1d\eps$, an improvement by a factor of $10$ (reflecting the fact that $1/10$ of the roots are at unknown locations). 

\goodbreak

This way, if most roots are simple and reasonably well separated from the others, then we obtain much better estimates for the disks containing roots, and it will often be possible to distinguish all roots by the improved estimates thus acquired even if the rough initial bound 
is too weak.

%\newpage

\subsection{The Required Precision}
\label{Sub:RequiredPrecision}

Unlike many other algorithms, Newton's method is self-correcting: any loss of accuracy that might have occurred along the way is eventually corrected --- of course, within reason. In the language of ``hyperbolic dynamics'' \cite[Chp.~5]{BrinStuck} (the dynamics at least within the immediate basins is hyperbolic in this sense), there is a $\eps^*>0$ so that every $\eps^*$-pseudo-orbit is shadowed by a nearby true orbit: this means that if every iteration has numerical accuracy better than $\eps^*$, then there is an actual Newton orbit with ideally precise computation that is close to the pseudo-orbit. 

Here we give an exact estimate, to the best of our knowledge for the first time for Newton's method, for the required accuracy of computation. We start with a theoretical worst-case estimate and later explain what more realistic values are.

\begin{theorem}[Required Precision of the Newton Algorithm]
\label{Thm:RequiredPrecision}
For every degree $d$ and every distance $\delta>0$, there is an accuracy $\beta_d(\delta)>0$ with the following property: for every polynomial $p$ of degree $d$ with minimal distance $\delta>0$ between any two roots, and for every circle $C$ surrounding all roots $\alpha_1,\dots,\alpha_d$ of $p$, there are points $u_1,\dots,u_d\in C$ so that each $u_j$ converges to $\alpha_j$ under the Newton map $N_p$ of $p$, and with the following guaranteed precision: if we have a pseudo-orbit
\[
w_0=u_j \text{ and } w_{n+1}=w_n-(1+\delta_n)p(w_n)/p'(w_n) = N_p(w_n)-\delta_n p(w_n)/p'(w_n)
\]
with all $|\delta_n|<\beta_d$, then $w_n\to\alpha_j$ with the same quadratic rate of convergence as $z_n\to\alpha_j$.

The required accuracy $\beta_d$ in the worst case is, to leading order, 
\[
1/(128\pi e^2 d^5(\log d)^2(\log d+|\log\delta|)) 
\;.
\]
\end{theorem}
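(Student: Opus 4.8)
\emph{Proof proposal.} The plan is to deduce the statement from the ``thick channel'' estimates of \cite{HSS} together with a quantitative shadowing argument in the hyperbolic metrics of the immediate basins. Since Newton's method commutes with affine changes of coordinate and the perturbation $\delta_n$ is a \emph{relative} error in the Newton correction, it suffices to treat polynomials all of whose roots lie in $\disk$, with $C\supseteq\disk$. First I would fix, for each root $\alpha_j$, a starting point $u_j\in C$: by \cite[Prop.~5]{HSS} the immediate basin $U_{\alpha_j}$ is unbounded, and by \cite[Sec.~8]{HSS} one of its channels meets $C$ in an arc whose relative length is at least $0.4\,d^{-3/2}$, so I would take $u_j$ near the midpoint of such an arc. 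Then the true orbit $z_n^{(j)}:=N_p^{\circ n}(u_j)$ converges to $\alpha_j$ and, since $U_{\alpha_j}$ omits the other $d-1$ roots and is not all of $\C$, it is a hyperbolic Riemann surface; the midpoint choice keeps $(z_n^{(j)})$ at a definite hyperbolic distance from $\partial U_{\alpha_j}$ and, together with the channel, bounds a forward-invariant ``tube'' $T_j\subset U_{\alpha_j}$ around $\{z_n^{(j)}\}\cup\{\alpha_j\}$ whose hyperbolic half-width I would estimate from below in terms of $d$ and $\delta$.

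Next I would estimate $N_p$ as a contraction of the hyperbolic metric of $U_{\alpha_j}$ along the orbit. On $U_{\alpha_j}$ the map $N_p$ is holomorphic and fixes $\alpha_j$ with $N_p'(\alpha_j)=0$, so it is not a covering and Schwarz--Pick gives strict contraction on compacta; I would quantify this in three regimes. (i) Outside $\disk$ one has $|N_p(z)-q|\approx\tfrac{d-1}{d}|z-q|$ for the centre $q$ of $C$, so the orbit approaches $\disk$ geometrically, which costs $O(d)$ iterations per constant factor and produces the $O(d^2)$ iteration count and an $e$-type constant. (ii) In the channel regime, between $\disk$ and a fixed neighbourhood of $\alpha_j$, I would compare the hyperbolic metric of $U_{\alpha_j}$ with that of a strip (or sector) of opening comparable to the channel width $0.4\,d^{-3/2}$, obtaining a per-step hyperbolic contraction of the form $1-\Omega(d^{-3/2})$ and a count of $O(d^{3/2}(\log d)^2)$ iterations to traverse it. (iii) Near $\alpha_j$, $N_p(z)-\alpha_j=O((z-\alpha_j)^2)$ gives quadratic convergence down to the error floor; here $U_{\alpha_j}$ may be pinched to width $\sim\delta$ by a nearby root, so driving the step size below $\delta/(2d)$ --- which is what \eqref{Eq:SomeRootNearby} needs in order to \emph{separate} two roots at distance $\delta$ --- costs a further $O(\log d+|\log\delta|)$ iterations in that pinched neck. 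This last point is the origin of the ``$\log d+|\log\delta|$'' in the final bound.

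Finally I would run the shadowing estimate. The error injected at step $n$ has Euclidean size $|w_{n+1}-N_p(w_n)|=|\delta_n|\,|p(w_n)/p'(w_n)|=|\delta_n|\,|N_p(w_n)-w_n|\le\beta_d\,|N_p(w_n)-w_n|$; by the Koebe bounds the hyperbolic density of $U_{\alpha_j}$ at $w$ lies between $1/(2\operatorname{dist}(w,\partial U_{\alpha_j}))$ and $2/\operatorname{dist}(w,\partial U_{\alpha_j})$, so this is a hyperbolic perturbation of size $\lesssim\beta_d\cdot g(d,\delta)$ per step. Feeding this into the Schwarz--Pick contraction and summing the resulting geometric-type series --- the errors decay once the orbit is near $\alpha_j$, and there are only $O\bigl(d^{3/2}(\log d)^2+\log d+|\log\delta|\bigr)$ non-negligible terms before that --- bounds the total hyperbolic deviation of $(w_n)$ from $(z_n^{(j)})$ by $\beta_d\cdot G(d,\delta)$; requiring $\beta_d\,G(d,\delta)$ to stay below the hyperbolic half-width of $T_j$, taking the worst of the $d$ roots, and tracking every constant (the $0.4$ and $2.5$ from the channel width, the factors $d$ from \eqref{Eq:SomeRootNearby} and from the degree, the $2$'s from Koebe, a $\pi$ from arc length on the circle, an $e^2$ from the geometric-approach estimate) then yields the admissible $\beta_d$ with the stated leading order $1/(128\pi e^2 d^5(\log d)^2(\log d+|\log\delta|))$.

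The hard part will be step (ii): upgrading the qualitative ``every immediate basin is unbounded and contains a channel of relative width $\ge 0.4\,d^{-3/2}$'' of \cite{HSS} to an \emph{explicit} polynomial-in-$d$ lower bound on $1-\lambda$ for $N_p$ along the orbit --- equivalently, an explicit upper bound on the number of fundamental domains of $U_{\alpha_j}$ that the orbit must cross inside a channel which may be long and winding --- and then patching this to the near-root and outside-$\disk$ regimes (and to the $\delta$-pinched neck of (iii)) without losing control of the constants. Everything else should be lengthy but routine bookkeeping of estimates already in hand.
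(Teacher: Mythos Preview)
Your architecture --- hyperbolic metric on the immediate basin plus a shadowing argument --- matches the paper's, but the execution diverges and does not close. The paper does \emph{not} extract a strict hyperbolic contraction rate for $N_p$; it uses only weak Schwarz--Pick (non-expansion), so that if each step injects hyperbolic error at most $1/M$ then $d_U(w_n,z_n)\le n/M$ by the triangle inequality plus non-expansion, and after $M$ steps the pseudo-orbit is within hyperbolic distance $1$ of the true orbit, hence in the domain of quadratic convergence. This is a purely additive estimate, not your geometric-series summation. The two quantitative inputs are imported as black boxes: \cite[Lemma~6]{NewtonIterations} supplies a curve from $\alpha$ to $\infty$ in $U$ along which the hyperbolic step satisfies $d_U(z,N_p(z))\le\tau\le\log d$, and \cite[Proposition~17]{NewtonEfficient} supplies the iteration count $M=64\pi d^3\tau^2(\log d+|\log\delta|)$ until quadratic convergence. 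The Euclidean-to-hyperbolic conversion uses Lemma~\ref{Lem:HypEstimateExtremal}: hyperbolic step $\le\tau$ forces Euclidean distance to $\partial U$ of at least $|z-N_p(z)|/(e^{2\tau}-1)$, so a relative error $\delta_n$ becomes a hyperbolic error of order $|\delta_n|\,e^{2\tau}\approx |\delta_n|\,d^2$. Setting this equal to $1/M$ yields the stated $\beta_d$; the $d^5$ is $d^3$ (from $M$) times $d^2$ (from $e^{2\tau}$ with $\tau\le\log d$).

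Your route tries instead to rebuild both ingredients from the channel-width bound of \cite{HSS}, and this is the gap. The quantity $0.4\,d^{-3/2}$ is the \emph{relative arc length} of the channel's trace on $C$; it is not a hyperbolic width of the channel interior and does not translate into a per-step contraction rate $1-\Omega(d^{-3/2})$ --- that claim is unjustified, and I do not see how to make it true. Even granting it, your iteration counts ($O(d^{3/2}(\log d)^2)$ in the channel, $O(\log d+|\log\delta|)$ near the root) and the constants you list do not combine to the announced $128\pi e^2 d^5(\log d)^2(\log d+|\log\delta|)$: the arithmetic does not close. The repair is to drop the strict-contraction programme, use non-expansion plus the additive bound $d_U(w_n,z_n)\le n/M$, and import $\tau\le\log d$ and $M$ from \cite{NewtonIterations} and \cite{NewtonEfficient} --- which is exactly the paper's proof.
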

In other words, if the orbit $(w_n)$ starts at $u_j$ and every Newton step is carried out with relative accuracy $\beta_d$ or better, then the approximate pseudo-orbit will converge to the same root $\alpha_j$ that the orbit of $u_j$ converges to, and at the same speed. Observe that the given condition says that $w_{n+1}-N_p(w_n) = \delta_n (N_p(w_n)-w_n)$. 

%\newpage

In the following proof and elsewhere, we will make use of the hyperbolic metric $d_U$ within the immediate basin $U$: this is the metric transported to $U$ by a Riemann map from the standard hyperbolic metric on the unit disk $\disk$. We need two of its main properties. The first is weak contraction: 
\begin{equation}
\begin{minipage}[t]{.9\textwidth}
{any holomorphic map $f\colon U\to U$ has $d_U(f(z_1),f(z_2))\le d_U(z_1,z_2)$ for all $z_1,z_2\in U$. 
}
\end{minipage}
\label{Eq:HypContraction}
\end{equation}
The other property is that the infinitesimal hyperbolic distance has good bounds: 
\begin{equation}
\begin{minipage}[t]{.9\textwidth}
if $dz$ is an (infinitesimally short) line segment in $U$ with Euclidean length $|dz|$, then its hyperbolic length is approximately $|dz|/\operatorname{dist}(dz,\partial U)$, where $\operatorname{dist}(dz,\partial U)$ denotes Euclidean distance. This bound is sharp up to a factor of $2$ in either direction. 
\end{minipage}
\label{Eq:HypEstimate}
\end{equation}

We will use the following standard estimate on elementary hyperbolic geometry (compare also \cite[Lemma~7]{NewtonIterations}):
\begin{lemma}[Hyperbolic Estimate]
\label{Lem:HypEstimateExtremal} 
If $V$ is a Riemann domain and $a,b\in V$ have $|a-b|=s$ and $d_V(a,b)=\tau$, then both $a$ and $b$ have Euclidean distance at least $s/(e^{2\tau}-1)$ from $\partial V$.
\end{lemma}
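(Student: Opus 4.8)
The plan is to reduce the statement to a lower bound on the hyperbolic distance $d_V(a,b)$ in terms of the Euclidean data $s=|a-b|$ and $R:=\operatorname{dist}(a,\partial V)$, and then to rearrange. Concretely, I will prove
\[
d_V(a,b)\;\ge\;\tfrac12\log\!\Bigl(1+\frac{|a-b|}{\operatorname{dist}(a,\partial V)}\Bigr)\;,
\]
which is exactly equivalent to $\operatorname{dist}(a,\partial V)\ge |a-b|/(e^{2d_V(a,b)}-1)$ after exponentiating and solving for $\operatorname{dist}(a,\partial V)$. Since $s$ and $\tau$ are symmetric in $a$ and $b$, the assertion for $b$ follows by exchanging their roles, so only the estimate above needs to be established.

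First I would fix $a$ as basepoint, set $R:=\operatorname{dist}(a,\partial V)$, and invoke the lower half of the two-sided estimate \eqref{Eq:HypEstimate} (a standard consequence of the Koebe one-quarter theorem, valid on any hyperbolic domain by passing to the universal cover): the infinitesimal hyperbolic density of $V$ at any point $z$ is at least $1/\bigl(2\operatorname{dist}(z,\partial V)\bigr)$. Combining this with the elementary Euclidean bound $\operatorname{dist}(z,\partial V)\le R+|z-a|$ (the boundary point realizing $\operatorname{dist}(a,\partial V)$ is within distance $R+|z-a|$ of $z$), the hyperbolic length of any rectifiable path $\gamma$ from $a$ to $b$ inside $V$ is at least $\int_\gamma |dz|/\bigl(2(R+|z-a|)\bigr)$.

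Next I would estimate this integral. Writing $\rho(t):=|\gamma(t)-a|$, which varies continuously from $0$ to $s$, and using that the Euclidean norm is $1$-Lipschitz, so $|\gamma'(t)|\ge|\rho'(t)|$, the integral dominates $\int_0^1 |\rho'(t)|/\bigl(2(R+\rho(t))\bigr)\,dt$, which is the total variation of $t\mapsto\tfrac12\log(R+\rho(t))$ on $[0,1]$ and hence at least $\bigl|\tfrac12\log(R+s)-\tfrac12\log R\bigr|=\tfrac12\log(1+s/R)$. Taking the infimum over all such $\gamma$ gives $d_V(a,b)\ge\tfrac12\log(1+s/R)$, which is the desired inequality; rearranging produces $R\ge s/(e^{2\tau}-1)$, and the symmetric argument with basepoint $b$ completes the proof.

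I do not expect a genuine obstacle: this is a routine application of the Koebe-type lower bound for the hyperbolic metric. The two points that need mild care are (i) justifying the density bound $\lambda_V(z)\ge 1/\bigl(2\operatorname{dist}(z,\partial V)\bigr)$ for a $V$ that is not assumed simply connected — this is precisely the lower direction of \eqref{Eq:HypEstimate} and follows from Koebe applied to the universal cover — and (ii) the fact that the length-minimizing (or nearly minimizing) path $\gamma$ need not be monotone in $|z-a|$, which is why the bound is phrased via the total variation of $\tfrac12\log(R+\rho)$ rather than a naive change of variables. The cases $b\in D_R(a)$ and $b\notin D_R(a)$ require no separate treatment, since only the endpoint values $\rho(0)=0$ and $\rho(1)=s$ enter the final estimate.
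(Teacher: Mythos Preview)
Your proposal is correct and follows essentially the same approach as the paper: both arguments combine the density lower bound from \eqref{Eq:HypEstimate} with the triangle inequality $\operatorname{dist}(z,\partial V)\le \operatorname{dist}(a,\partial V)+|z-a|$ and integrate along a path to obtain $\tau\ge\tfrac12\log(1+s/\operatorname{dist}(a,\partial V))$. The only cosmetic difference is that the paper parametrizes the geodesic by Euclidean arc length $t\in[0,s']$ (so that $|\gamma(t)-a|\le t$ and $s'\ge s$), whereas you track the radial function $\rho(t)=|\gamma(t)-a|$ and bound via its total variation; both packagings yield the same inequality.
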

\begin{proof}
Let $\gamma\colon [0,s']\to V$ be the unique hyperbolic geodesic segment connecting $a$ to $b$, parametrized by Euclidean length $s'\ge s$, and set $\delta:=\operatorname{dist}(a,\partial V)$ (where $\operatorname{dist}$ denotes Euclidean distance). Then the hyperbolic length of $\gamma$ is
\[
\tau = d_V(a,b) \ge \int_0^{s'} \frac{dt}{2\operatorname{dist}(\gamma(t),\partial V)}
\ge 
\int_0^{s'} \frac{dt}{2(\delta +t)}
=\frac 1 2 \log\frac{\delta+s'}{\delta}
\ge\frac 1 2 \log\left(1+\frac s \delta\right)
,
\]
where the first equality is definition of hyperbolic length, the first inequality is the standard estimate \eqref{Eq:HypEstimate} and the next one is the triangle inequality.
\end{proof}

\begin{proof}[Proof of Theorem~\ref{Thm:RequiredPrecision}]
Fix some root $\alpha$ with immediate basin $U$. 
It follows from \cite[Lemma~6]{NewtonIterations} that there is a curve within $U$ connecting $\alpha$ to $\infty$ so that every point $z$ on this curve has $d_U(z,N_p(z))\le\tau$ for some $\tau\le \log d$.
Lemma~\ref{Lem:HypEstimateExtremal} implies that if $|z-N_p(z)|=s$, then $D_r(z)\cup D_r(N_p(z))\subset U$ for $r\ge {s}/{(e^{2\tau}-1)}\ge s/(d^2-1)$.

Pick a point $z_0\in C\cap U$ with $d_U(z_0,N_p(z_0))\le \tau$. Let $z_n:=N_p^{\circ n}(z_0)$ be the points on its orbit. Then $d_U(z_n,N_p(z_n))\le\tau$ for all $n$ by hyperbolic contraction (Property~\eqref{Eq:HypContraction}), and $D_r(z_{n+1})\subset U$ for $r\ge  |z_n-z_{n+1}|/(d^2-1)$. We have $|z'_{n+1}-z_n| \le |z_n-z_{n+1}| +|z_{n+1}-z'_{n+1}| \le \left( 1+ 1/(d^2-1) \right)  |p(z_n)/p'(z_n)| $. 
Therefore, if we know $z_n$ precisely, then every $z'_{n+1}\in\C$ with $z'_{n+1}-z_n=(1+\delta_n)p(z_n)/p'(z_n)$ is still in $U$, for $|\delta_n|< 1/(d^2-1)$. 

However, we want to allow inaccuracies not only at a single $z_n$, but along the entire orbit. The total number of iterations required for $z_0$ to reach the domain of quadratic convergence is at most 
\begin{equation}
M=64 \pi  d^3 \tau^2 (\log d+|\log \delta|) (1+ O(1/\tau))
\label{Eq:M_bound}
\end{equation}
when all roots are $\delta$-separated \cite[Proposition~17]{NewtonEfficient}. 
 If we can make sure that $d_U(w_{n+1},N_p(w_n))<1/M$ for all $n$, then 
\begin{align*}
d_U(w_{n+2},N_p(N_p(w_n))) & 
\le d_U(w_{n+2},N_p(w_{n+1}))+d_U(N_p(w_{n+1}),N_p(N_p(w_n))) 
\\ 
&\le d_U(w_{n+2},N_p(w_{n+1}))+d_U(w_{n+1},N_p(w_n))\le 2/M \end{align*}
and thus, by induction, $d_U(w_n,z_n)\le n/M$ for all $n$. In particular, $d_U(w_M,z_M)\le 1$, where $z_M$ is in the domain of quadratic convergence. From there on, the orbit of $w_M$ follows the orbit of $z_M$ at hyperbolic distance at most $1$, up to the allowed relative error. Thus the orbit of $z_M$ is in the domain of quadratic convergence as well, and $w_n\to \alpha_j$ with the same rate of convergence. 

To make sure that $d_U(w_{n+1},N_p(w_n))<1/M$, we require that $w_{n+1}=w_n-(1+\delta_n)p(w_n)/p'(w_n)$ with $|\delta_n|\le 1/2M(e^{2(\tau+2)}-1)$. This condition means $w_{n+1}-N_p(w_n) = \delta_n p(w_n)/p'(w_n) = \delta_n (w_n-N_p(w_n))$.
We may assume by induction that $d_U(z_n,w_n)<n/M<1$, hence $d_U(w_n,w_{n+1})\le d_U(w_n,z_n)+d_U(z_n,z_{n+1})+d_U(z_{n+1},w_{n+1}) \le \tau+2$. Therefore $D_r(N_p(w_{n}))\subset U$ for $r\ge |w_{n}-N_p(w_{n})|/(e^{2(\tau+2)}-1)$. By the two standard estimates on hyperbolic length from \eqref{Eq:HypContraction} and \eqref{Eq:HypEstimate}, we then have 
\[
d_U(w_{n+1},N_p(w_n)) \le d_{D_r(N_p(w_{n}))}(w_{n+1},N_p(w_n)) \le 1/M
\] 
as required.

The required relative precision in every iteration step is thus 
\[
|\delta_n|< \frac{1}{2M(e^{(2\tau+2)}-1)}
\] 
for each iteration step until quadratic convergence is reached. Using \eqref{Eq:M_bound} and $\tau\le\log d$, we obtain $2M(e^{(2\tau+2)}-1)= 128\pi e^2d^5\tau^2(\log d+|\log\delta|)$ plus lower order terms. 
\end{proof}

Of course, the required relative accuracy of less than $O(d^{-5})$ is unrealistic in practice --- this is an upper bound under unrealistic worst case assumptions in many places. For instance, the hyperbolic estimates are carried out when the immediate basin $U$ is all of $\C$ minus a half line, ignoring all other basins and the fact that $U$ is invariant by the  dynamics. If instead we assume that the hyperbolic geodesic in $U$ connecting $z$ to $N_p(z)$ has equal distance to the boundary everywhere (up to a bounded factor), then the bound improves as follows, using \eqref{Eq:HypEstimate}: when $|z-N_p(z)|=s$, then $D_r(z)\cup D_r(N_p(z))\subset U$ for $r\ge s/\tau$. 
 This replaces the factor $e^{2(\tau+2)}\approx e^2d^2$ by $\tau+2$ and brings $\beta_d$ down to $128\pi d^3\tau^3(\log d+|\log\delta|)$. 

The second serious improvement is that, according to \cite{BAS}, the number of iterations required to find all roots, in case the roots are reasonably equidistributed in $\disk$, is of the order $d^2(\log d)^4$, rather than $d^3(\log d)^4$. Since this is the maximal number of iterations required to find all roots, the number of iterations required for any single root is still one order lower, at $d(\log d)^4$ (at least on average) --- in fact, our experiments show a number of iterations at most $d$ for each root (see below). If we substitute $d$ for $M$, then we obtain $\beta_d\le 1/2d\log d$: the necessary relative precision is approximately $1/d$, up to logarithmic factors. This is of course not a theoretically proven worst-case bound, but a bound that is supported by a combination of precise estimates under reasonable assumptions on equidistribution of roots and on the shape of domains that are supported by experimental evidence. In any case, the success of the algorithm can be verified independently by simple a-posteriori-estimates as described in Section~\ref{Sub:Refining}.

\section{The Polynomials of Our Study}
\label{Sec:ThePolynomials}

In this section, we describe several polynomials of large degrees for which we find all roots. Most of them are motivated by different questions from holomorphic dynamics, while some are not, on purpose.

\subsection{Centers of Hyperbolic Components of the Mandelbrot Set}
\label{Sub:CentersHypCompsIntro}

The \emph{Mandelbrot set} $\M$ is defined as the set of all parameters $c\in\C$ for which the filled-in Julia set of $p_c(z):=z^2+c$ (that is the set of all $z\in\C$ for which the orbit under $p_c$ is bounded) is connected. Equivalently, $\M$ is the set of $c\in\C$ for which the orbit of $0$ is bounded under iteration of $p_c$ (note that $z=0$ is the unique critical point of $p_c$). It is an easy exercise to show that $\M\subset \ovl{D_2(0)}$, and also $\M\subset {D_{2}(-0.75)}$ (see the remark after Lemma~\ref{Lem:EscapeRadius}). The Mandelbrot set is shown in Figure~\ref{Fig:Mandelbrot}.

\begin{figure}
\framebox{
\includegraphics[width=.6\textwidth]{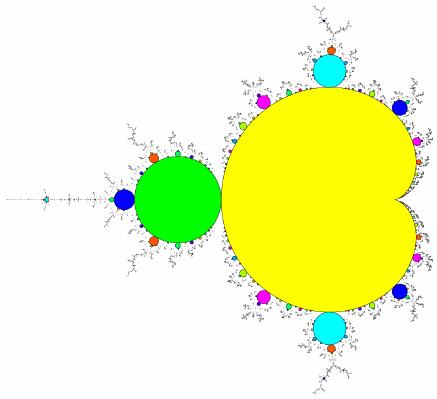}
}
\caption{The Mandelbrot set, with hyperbolic components of different periods indicated by different colors.}
\label{Fig:Mandelbrot}
\end{figure}

If $p_c$ has an attracting periodic point, that is a $z\in\C$ with $p_c^{\circ n}(z)=z$ and $|(p_c^{\circ n})'(z)|<1$ for some period $n\in\N^+$, then $c$ is in a component of the interior of $\M$ called \emph{hyperbolic component}, and for all parameters $c$ within this hyperbolic component the maps $p_c$ have attracting periodic points of constant period. Conjecturally, every interior point of $\M$ is in a hyperbolic component (this famous conjecture is known as \emph{density of hyperbolicity in the quadratic family}, and by a classical theorem of Douady and Hubbard it follows from the conjecture of \emph{local connectivity of the Mandelbrot set} \cite{Orsay,MandelStruct}). 

For various reasons, it is of interest to find the hyperbolic components of $\M$: they are easy to describe and reasonably easy to find, they describe the entire combinatorial structure of $\M$ and conjecturally also its topology, and conjecturally the union of their areas is the total area of $\M$; indeed finding a good bound on the area of $\M$ was one of the key inspirations for this investigation \cite{MandelArea}. 

Every hyperbolic component has a unique \emph{center}: that is a parameter $c$ for which $z=0$ is periodic. These centers satisfy a polynomial equation as follows: define polynomials $P_0(c)=0$ and $P_{n+1}(c)=P_n^2+c$, then $P_n(c)$ is the $n$-th iterate of $z=0$ under the map $z\mapsto p_c(z)=z^2+c$. The centers of period $n$ are thus  roots of $P_n$, which is a polynomial in $c$ of degree $2^{n-1}$; conversely, the roots of $P_n$ are centers of hyperbolic components of period $n$ or of period dividing $n$. Specifically, we have $P_1(c)=c$, $P_2(c)=c^2+c$, $P_3(c)=c^4+2c^3+c^2+c$, and in general $P_n(c)=c^{2^{n-1}}+2^{n-2}c^{2^{n-1}-1}+\dots + c^2+c$. 

The first of the polynomials for which we find all roots is $P_{21}$, which has degree $2^{20}=1\,048\,576$. This polynomial factors into $P_1$, $P_3/P_1$, $P_7/P_1$ and a remainder $Q_{21}$ that have degrees $1$, $3$, $63$, and $1\,048\,509$, respectively and that are (conjecturally) all irreducible (this conjecture has been verified numerically for a number of small $n$ by Manning, but for general $n$ it has been open for more than 20 years now). However, it is {much} easier to find all roots of $P_{21}$ than of $Q_{21}$ of slightly lower degree: for the simple reason that $P_{21}(c)$ and also $P'_{21}(c)$ can be computed easily by recursion, while $Q_{21}$ cannot.

Since $\M\subset D_{2}(-0.75)$, the circle $C=\{z\in\C\colon |z+0.75|=2\}$ surrounds all centers. This is the circle we use for our study.
(Of course, in this case one can exploit the real symmetry of $\M$ and only use starting points with non-negative imaginary parts; these are sufficient to find all centers up to complex conjugation. We used this shortcut in some of our initial computations.)

\subsection{Periodic Points of Quadratic Polynomials}

For a single polynomial $p_c(z)=z^2+c$, the periodic points of period $n$ are roots of the polynomial $P_n(c,z)=p_c^{\circ n}(z)-z$ of degree $2^n$.
Periodic points are of obvious interest in the theory of dynamical systems. For instance, they are dense in the Julia set; in fact, the equidistributed atomic masses at the periodic points of period $n$ converge to the unique measure of maximal entropy on the Julia set \cite{LyubichMeasMaxEntropy}. 

These polynomials factor again by exact periods: every $P_{mn}(c,z)$ is divisible by $P_n(c,z)$, so one can write them as a product $P_n(c,z)=\prod_{k|n} Q_k(c,z)$. These $Q_k$, considered as polynomials in $z$ over the field $\C(c)$, are known to be irreducible (their Galois groups are even the largest groups that are compatible with the dynamics \cite{Bousch,IntAddr}). But again, for every $n$ it is much easier to compute the roots of $P_n$ than of their irreducible factors $Q_n$, and the difference in degrees is minor. 

We find all periodic points of period $n=20$ for two polynomials: the first is $z\mapsto z^2+i$, which is one of the simplest quadratic polynomials of interest (the critical point is preperiodic: after 2 iterations, it falls onto a cycle of period $2$). 

The second candidate is $p(z)=z^2+2$. This polynomial is dynamically not very interesting: the critical orbit is $0\mapsto 2\mapsto 6\mapsto 38 \mapsto 1446\mapsto\dots \to\infty$, so the Julia set is known to be a Cantor set. 
An easy estimate shows that for both polynomials the Julia set, and thus all periodic points, are contained in the disk ${D_2(0)}$ (see below). However, the coefficients of the second polynomial are gigantic: for instance, the constant coefficient of $p_2^{\circ n}$ equals $p_2^{\circ n}(0)\ge 2^{2^{n-1}}$. It is just a very futile task just to write down all coefficients of, say, $p_2^{\circ 20}$ and thus of $P_{20}(2,z)=p_2^{\circ 20}(z)-z$ (or even to evaluate the constant coefficient!) --- but it is not at all impossible to evaluate the polynomial and to find all roots! This example should serve as an illustration that in order to find all roots of a polynomial it is not necessary, and sometimes not even the right condition, to know all coefficients. (In fact, in certain examples it was often impossible to evaluate $P(z)$ and $P'(z)$; all we needed was the quotient, and this had reasonable values.)

For both maps $z\mapsto z^2+i$ and $z\mapsto z^2+2$, we used $C=\{z\in\C\colon |z|=2\}$. This is based on the following elementary lemma.

\begin{lemma}[Escape radius for quadratic polynomials]
\label{Lem:EscapeRadius}
If $|c|\le 2$ and $|z|\ge 2$, then $p_c^{\circ k}(z)\to\infty$ as $k\to\infty$, except if $c=-2$ and $z=\pm 2$. 
\end{lemma}
\begin{proof}
If $|c|\le 2$ and $|z_n|= 2+\eps$ with $\eps>0$, then $|z_{n+1}|=|z_n^2+c|> 4+4\eps-|c| \ge  |z_n|+3\eps$, and inductively $|z_{n+k}|\ge 2+3^k\eps$. Therefore, all orbits in which some $|z_n|>2$ will converge to $\infty$ for any polynomial $z\mapsto p_c(z)=z^2+c$ with $|c|\le 2$. 

It remains to treat the case $|z_n|=2$. If $|c|<2$, then clearly $|z_n^2+c|\ge |4-|c||>2$ and the previous case applies. If $|z|=2$ and $|c|=2$, then $|z^2+c|>2$ unless $z^2=-2c$, and $|(z^2+c)^2+c|>2$ unless $z^2=-2c=(z^2+c)^2$; but this implies $z^2+c=\pm z$, so $\pm z$ is a fixed point of $z^2+c$. However, the two fixed points are $1/2\pm\sqrt{1/4-c}$, and if $|c|=2$ then $|1/2\pm \sqrt{1/4-c}|\le 1/2+3/2=2$ with equality only for $c=-2$.
\end{proof}

In conclusion, all periodic points of $p_c$ are thus surrounded by $C$ for the two cases $c=2$ and $c=i$ that we investigated. 

\begin{remark}
This result also implies the fact that $\M\subset\{c\in\C\colon |c|< 2\}\cup\{-2\}$ that we had mentioned earlier: recall that $\M$ is the set of parameters $c$ for which the orbit $z_0=0$, $z_{n+1}=z_n^2+c$ is bounded. If $|c|=2+\eps$ with $\eps>0$, then as always $z_1=c$ and inductively $|z_{n}|> 2+3^{n-1}\eps$, similarly as in the lemma. If $|c|=2$, then again $z_1=c$ and it follows directly from the lemma that $c\in\M$ only if $c=-2$. 

The bound that $\M\subset D_2(-0.75)$ follows similarly: if $c=-3/4+r e^{i\phi}$ with $r\ge 2$, then $c^2+c=-3/16-(r/2) e^{i\phi}+r^2e^{2i\phi}$ and $|c^2+c| \ge r^2-r/2-3/16$, and if $r\ge 2$ then $|c^2+c|\ge 3-3/16>2$. This justifies the claim that the circle $\{c\in\C\colon |c+0.75|=2\}$ surrounds $\M$ and hence all centers of its hyperbolic components.
\end{remark}

%\newpage

\subsection{Random Generalization}

The polynomials described in the previous sections were motivated by applications in dynamics, so one might object that these had special properties that could possibly help in finding their roots. We thus extend our discussion to a couple of polynomials that, on purpose, do \emph{not} have a motivation in terms of dynamics, so we hope they represent reasonably random cases of polynomials of large degrees. However, we define them recursively so that they are much more efficient to evaluate without having to know (or even look at) the coefficients: our task here is to find roots of polynomials, and we choose polynomials that can easily be evaluated.

For a sequence $c_i\in\C$ with $|c_i|\le 2$, let $p_i(z):=z^2+c_i$ and $P(z):=p_n\circ p_{n-1}\circ\dots\circ p_2\circ p_1$; here with $n=20$. This yields a polynomial of degree $2^{20}=1\,048\,576$ for which we find all the roots. 
For our polynomial $P$, we use a randomly chosen sequence of $c_i\in \ovl{D_2(0)}$. The distribution of these random numbers is as follows: we write $c=r e^{i\phi}$ and have $r\in[0,2]$ equidistributed and $\phi\in[0,2\pi]$ equidistributed (so that the density with respect to planar Lebesgue measure increases towards the center). 
 
Similarly as before, if all $|c_i|\le 2$, then $|z|\ge 2$ implies $|z^2+c_i|\ge |z|$, so that all roots have $|z|<2$. (In fact, one can recursively compute the zeros of $P$ by successively extracting square roots; this feature is quite special for the polynomials at hand, and it is useful for double-checking the results, but of course is not used by our Newton algorithm. Instead of finding roots of  $P(z)$, we could also find the roots of $P(z)-z$ of the same degree. This would find all fixed points of $P$; these could be viewed as ``periodic points of the random iteration $p_n\circ\dots\circ p_1$''. While zeroes of $P$ can easily be found recursively as mentioned above, periodic points cannot in general be computed in terms of radicals, even in the special case that all $c_i$ coincide: 
in that case, one can compute the Galois groups explicitly \cite{Bousch,IntAddr}, and they are not solvable.

Since all roots $\alpha$ of $P$ satisfy $|\alpha|<2$, we again use the circle $C=\{z\in\C\colon |z|=2\}$. 

%\newpage

\section{Numerical Results}
\label{Sec:NumericalResults}

The main conclusion of our experiments is that Newton's method works in practice quite efficiently. Here we describe the results of our experiments for the various polynomials under consideration.

\subsection{Centers of hyperbolic components}

The goal was to find all parameters $c\in\C$ with the property that the iteration $c_0=0$, $c_{n+1}=c_n^2+c$ yields $c_{21}=0$. Since each $c_n$ is a polynomial in $c$ of degree $2^{n-1}$, we obtain a polynomial of degree $d=2^{20}=1\,048\,576$ with real coefficients, so all roots are either real or come in pairs of complex conjugates. 

We found all $1\,048\,576$ roots; the minimal distance between any two of them was $2.69\cdot  10^{-11}$ (see Figure~\ref{Fig:MandelbrotDistanceHistogram} for the precise distribution of distances between pairs of roots). 

The Newton iteration required $4d$ equidistant starting points on the circle $|c+0.75|=2$. By far the most starting points found a root after between $582\,223$ and typically $850\,000$ iterations, but about $70$ roots required more (up to $3\,017\,114$ iterations). A total of $430$ starting points did not find any roots but instead converged to attracting cycles, most of which had period $2$, but some had period $4$, and some were non-real. The total number of iterations required to find all roots was $3\,056\,825\,939\,654\approx 2.78d^2$; if we exclude those starting points that converge to cycles of higher period, then we have $3\,048\,225\,939\,654 \approx 2.77d^2$ iterations.  (Had we exploited the symmetry of $\M$, then only $2d$ starting points and only $1.39d^2$ iterations would have been required.) 
 The number of roots found as a function of the number of starting points tried is shown in Figure~\ref{Fig:MandelStatistics}, and a histogram displaying the number of iterations required is given in Figure~\ref{Fig:MandelHistogram}.
The speed of convergence for many starting points (for period 13) is indicated in Figure~\ref{Fig:MandelbrotSpeedDiagram}.

In this experiment, we used $\eps_\text{success}=10^{-16}$ and $M_\text{fail}=2\cdot10^7$, that means every orbit was iterated until either $|z_n-N_p(z_n)|<\eps_\text{success}$ or $n\ge M_\text{fail}$. In the former case,  we know there is some root $\alpha$ with $|z_n-\alpha|< d\eps_\text{success}=2^{20}\cdot 10^{-16}<1.05\cdot 10^{-10}$. The disks with this radius around our $d$ approximated roots are all disjoint with very few exceptions (Figure~\ref{Fig:MandelbrotDistanceHistogram} shows that there are only a few pairs of roots at mutual distance less than $2.1\cdot 10^{-10}$), and for these the improved estimates from Section~\ref{Sub:Refining} make it possible to show that all roots have been found, and no root has been accounted for twice. (As a possible improvement, one could have iterated all found approximate roots one or two more times, which would in practice yield a significantly smaller value of $|N(z_n)-z_n|$ and thus simplify the proof of disjointness of our approximate roots.)

\begin{figure}[htbp]
\framebox{\includegraphics[width=.8\textwidth]{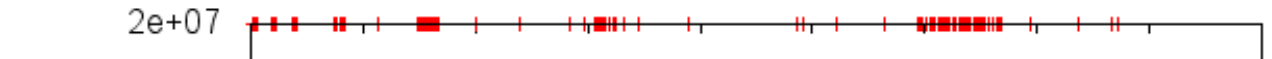}}
\framebox{\includegraphics[width=.8\textwidth,trim=3 10 26 3]{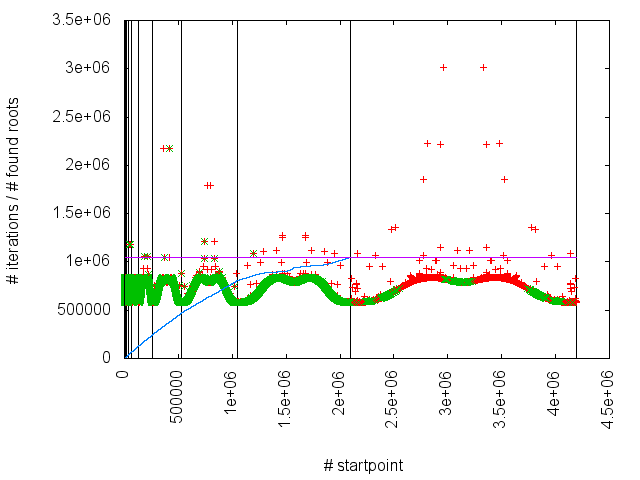}}
\caption{Illustration of the numerical effort to find all centers of hyperbolic components of period $n= 21$ (and dividing $21$) of the Mandelbrot set. 
The number of roots found as a function of the number of starting points tried is shown in the blue graph: it can be seen that most roots were already found by $2d$ starting points (note the dyadic refinements: between consecutive powers of $2$ a new generation of starting points at half the previous arc distance was used). Every red \textsf{+} marks a root found by the algorithm; the $x$-coordinate counts the starting points used so that the $n$-th starting point has $x=n$. The $y$-coordinate gives the corresponding number of necessary iterations. Roots that are found for the first time are marked additionally by a green \textsf{x}.
Note that the vertical scales for both graphs are identical. 
The number of iterations required to find the roots seems to oscillate along a curve, but there are a few outliers (most of these correspond to starting points that converge to roots that had been found earlier by other starting points, converging faster). Top of the picture: some starting points did not converge at all, but found attracting periodic orbits instead. When this happens, this is shown by a red \textsf{+} (no roots found) at the maximal iteration number of $2\cdot 10^7$.}
\label{Fig:MandelStatistics}
\end{figure}

\begin{figure}[htbp]
\framebox{\includegraphics[width=.786\textwidth]{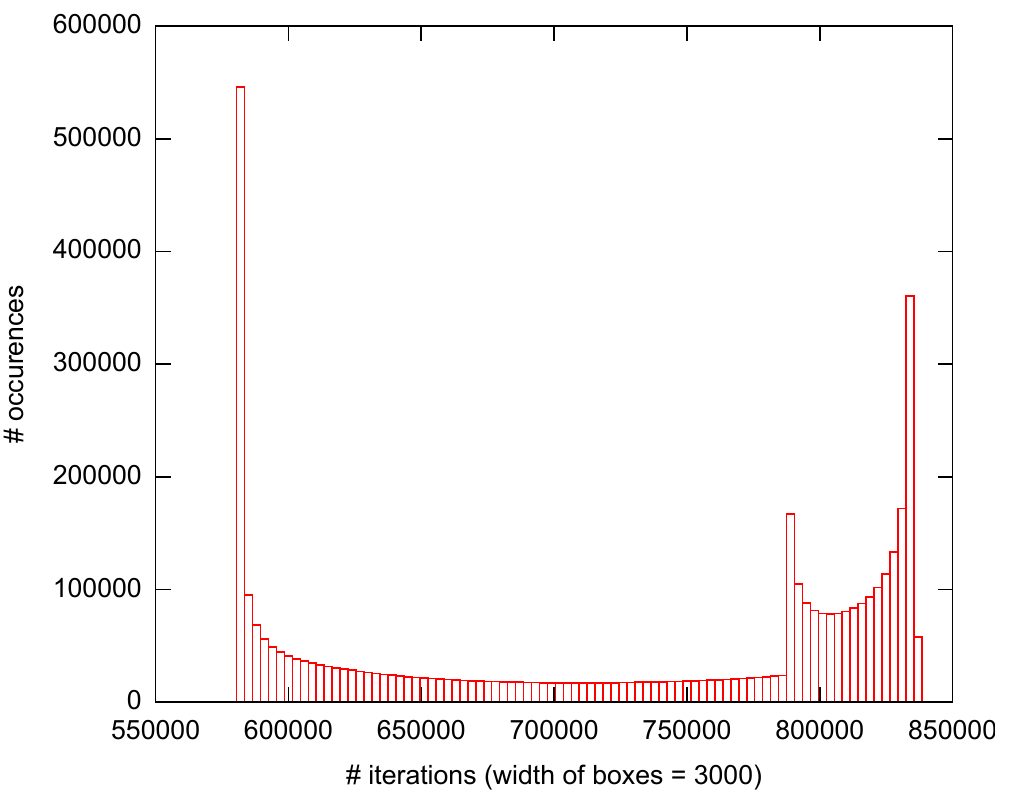}}
\caption{Histogram for the number of iterations required to find centers of period dividing $21$ of the Mandelbrot set (for every center, only the first starting point that found this root is counted). About 70 orbits took longer than shown here; on this vertical scale, they would be invisible (of these, all but 8 correspond to starting points that rediscovered roots found earlier by other, faster converging starting points).
}
\label{Fig:MandelHistogram}
\end{figure}

\begin{figure}
\framebox{\includegraphics[width=.8\textwidth,trim=20 30 20 10,clip]{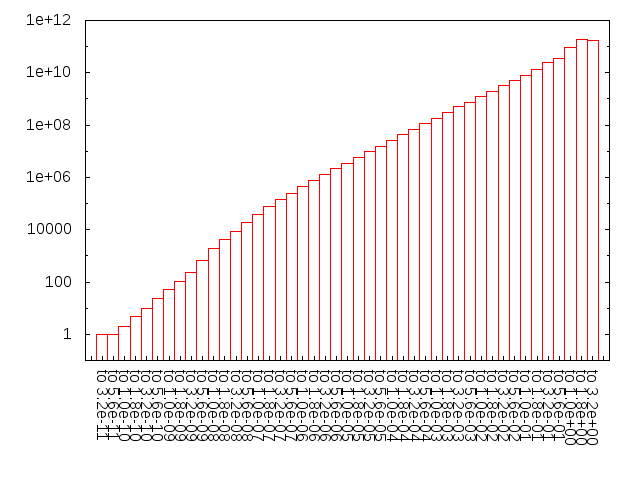}}
\caption{Statistics on the mutual distance between pairs of centers of hyperbolic components of period $21$ in the Mandelbrot set.}
\label{Fig:MandelbrotDistanceHistogram}
\end{figure}

In a separate earlier experiment, we had used $1.5d$ equidistributed starting points on a semicircle (exploiting the symmetry of $\M$), and these found all the roots after a total of approximately $1.1d^2$ iterations (had we ignored the advantages given by the symmetry, $2.2d^2$  iterations would have been required). 

\begin{figure}[htbp]
\framebox{\includegraphics[width=.8\textwidth,trim=10 9 10 10,clip]{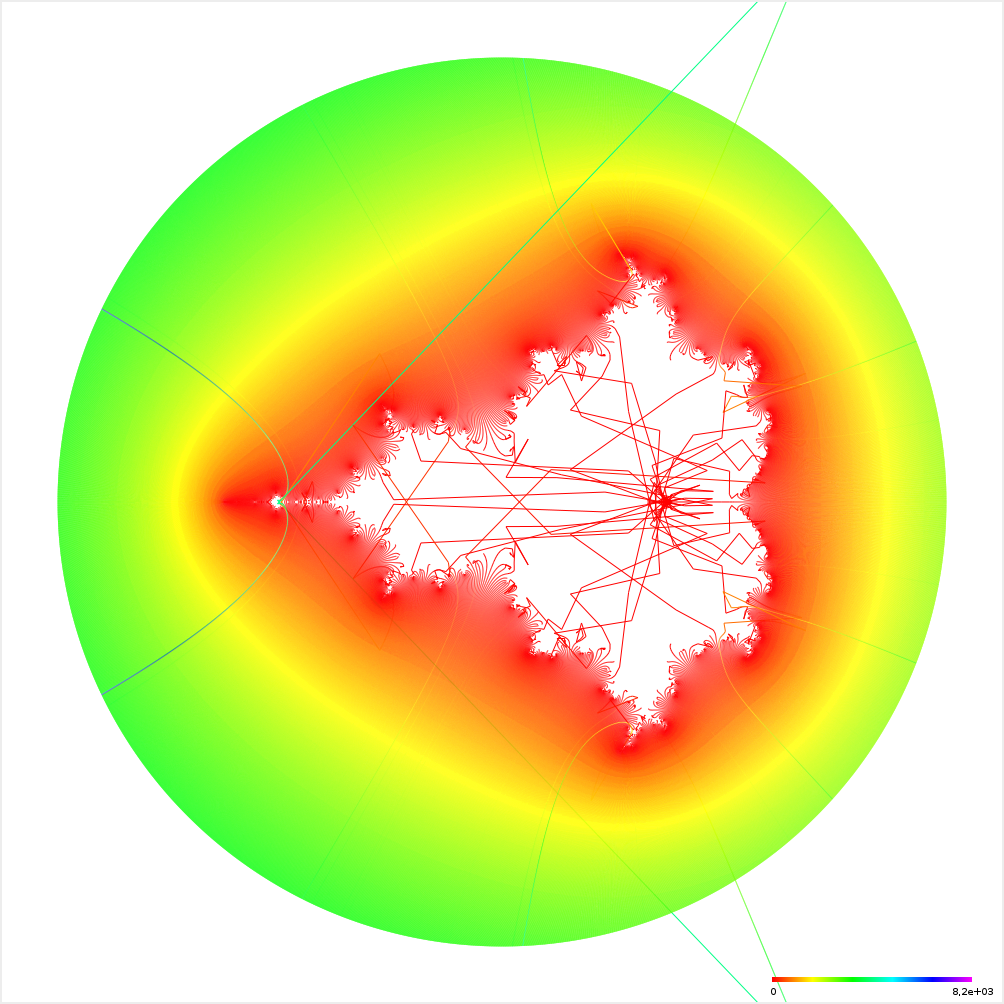}}
\caption{Speed diagram for the Newton iteration for the centers of the Mandelbrot set (for simplicity, of degree $2^{13}$): the color codes the number of iterations required for all described starting points. The picture was obtained by starting the Newton iteration for many points on the circle $|c|=2$ and coloring points along the orbit according to the number of iterations until a center was found with fixed high accuracy (so that quadratic convergence is achieved). The picture shows that most orbits stayed outside of $\M$ or near its boundary; a few orbits can be seen that cross the interior of $\M$. }
\label{Fig:MandelbrotSpeedDiagram}
\end{figure}

As additional tests, we performed the Viete test as described in Section~\ref{Sub:A-Posteriori}. The sum of all roots should be equal to the  negative of the degree $d-1$ coefficient by Viete's formula; in our case, this is $-2^{n-2}=-524\,288$. The numerical sum was
$ -524\,288+1.43\cdot 10^{-11}+1.92\cdot 10^{-14}i$
with an error of absolute value $1.43\cdot 10^{-11}$, about half the mutual distance between any two roots found --- and it is known that every root, other than a single root at $c=0$, has absolute value at least $0.25$. The roots found thus passed this test as well.

Similarly, the product of the roots should equal the constant coefficient, which vanishes trivially because of the root at $c=0$. However, the product of the non-zero roots should equal the linear term $-1$, and the numerical result was $-1-1.04\cdot 10^{-16}+1.73\cdot 10^{-17}i$; here the error has absolute value $1.05\cdot 10^{-16}$, quite close to our guaranteed numerical accuracy of $10^{-18}$. We should mention that computing the product of the non-zero roots required a bit of care: multiplying the $2^{20}$ roots in the order found exceeds the possible data format for some partial products, and we had to choose an order of the roots so that the partial products would remain within reasonable limits.  

One might remark that in our initial experiment with $1.5d$ equidistant points on a semicircle for the same polynomial, the sum of the roots differed from the degree $d-1$ coefficient by $- 9.18\cdot 10^{-12}-3.7\cdot 10^{-14}i$, which has absolute value $9.18\cdot 10^{-12}$, and the product of the non-zero roots differed from $-1$ by  $-2.80\cdot 10^{-17}-1.09\cdot 10^{-17}i$, which has absolute value $ 3\cdot 10^{-17}$: both values are even smaller than for the slightly larger sets of $2d$ points on a semicircle or $4d$ points on the circle.

\subsection{Periodic points of quadratic polynomials}

For the polynomial $p(z)=z^2+2$, a periodic point of period $n$ is a root of the polynomial $P(z)=p^{\circ n}(z)-z$ of degree $2^n$. Here we found all roots for period $n=20$, so that $P$ has degree $2^{20}=1\,048\,576$.

In Figure~\ref{Fig:PeriodicPoints c=2 statistics}, we illustrate the numerical results. All $d=2^n$ roots were found by less than $4\,194\,304=4d$ starting points (horizontal axis; the blue curve illustrates the number of roots found as a function of the number of starting points). The number of Newton iterations required to find the root by the first starting point that discovered it (not necessarily the fastest) is marked in green. It turns out that the number of iterations required was between $453\,193$ and $955\,762$ with average $726\,819 \approx 0.693d$, and less than $d$ in every case: see the histogram in Figure~\ref{Fig:PeriodicPoints c=2 histogram}.  The total number of Newton iterations required was approximately $2.773 d^2$. The periodic points found (for period $13$) are illustrated in Figure~\ref{Fig:PeriodicPoints c=2}. (Interestingly, periodic points of period $n=19$ were also found by $4d$ starting points with a total number of $2.773\,d^2$ iterations, where again $d=2^n$; see the remark on ``numerology'' in Section~\ref{Sub:Statistics}.)

The minimal distance between any two roots found was $2.17\cdot 10^{-10}$. The statistics on the mutual distance between pairs of roots is displayed in Figure~\ref{Fig:Quad_2_dist_histogram}. 

\begin{figure}[htbp]
\centerline{\framebox{\includegraphics[width=.95\textwidth]{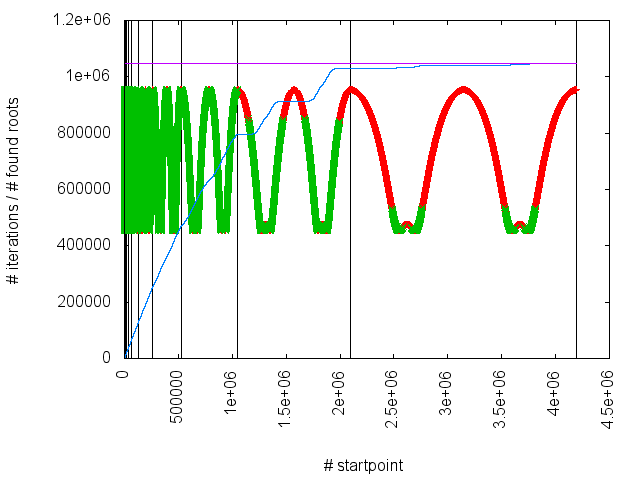}}}
\caption{Illustration of the numerical effort to find all periodic points of period $n=20$ for $p(z)=z^2+2$. Blue: number of roots found as a function of the number of starting points required (note the dyadic refinements: between consecutive power of $2$ a new generation of starting points at half the previous arc distance was used). Every green \textsf{x} marks a new root found by the algorithm; the $x$-coordinate counts the starting points used so that the $n$-th starting point has $x=n$. The $y$-coordinate gives the corresponding number of necessary iterations. (A red \textsf{+} shows the results of starting points that found a previously discovered root.) Note that the vertical scale for both graphs are identical. }
\label{Fig:PeriodicPoints c=2 statistics}
\end{figure}

\begin{figure}[htbp]
\centerline{\framebox{\includegraphics[width=.78\textwidth]{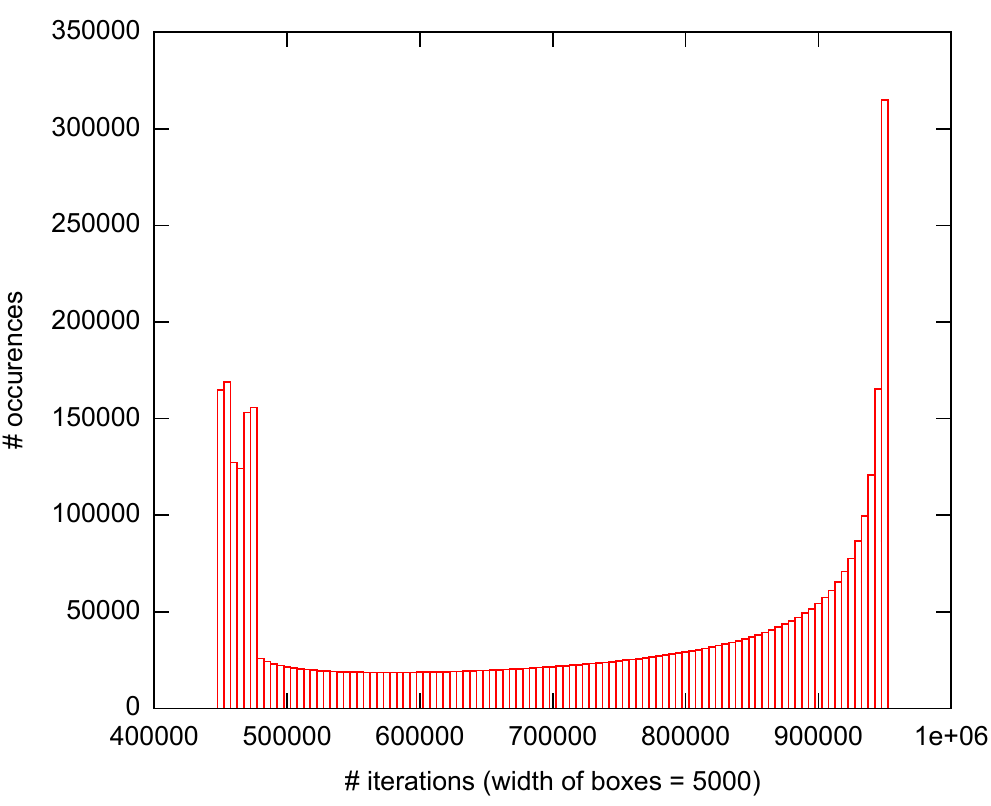}}}
\caption{Frequency distribution of the number of iterations required to find all periodic points of period $n=20$ of $z\mapsto z^2+2$. Observe the pronounced maxima near the minimal and maximal values.}
\label{Fig:PeriodicPoints c=2 histogram}
\end{figure}

\begin{figure}[htbp]
\framebox{
\includegraphics[width=.6\textwidth,trim=20 20 20 20,clip]{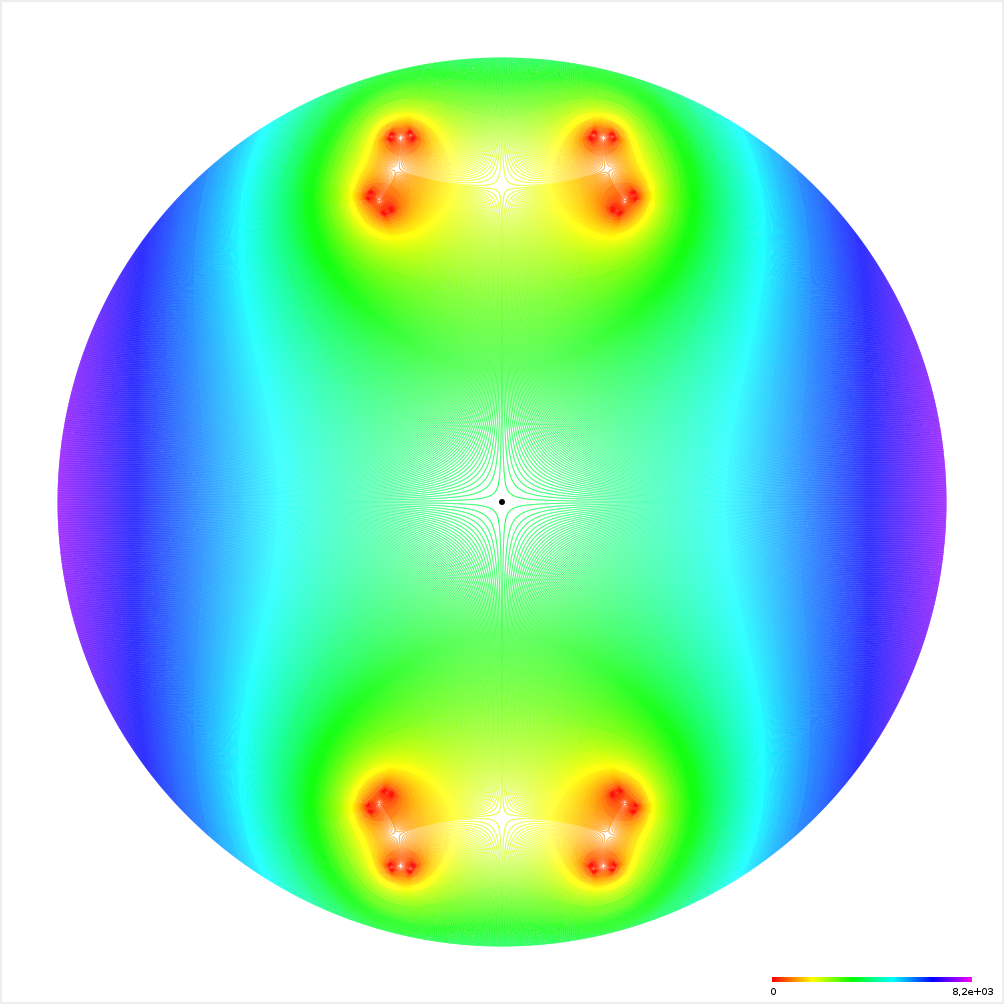}
}\caption{Speed diagram for finding periodic points of period $n=13$ for $p(z)=z^2+2$. Colors indicate the required number of iterations, ranging from red (very near the roots) to blue (large number of iterations).}
\label{Fig:PeriodicPoints c=2}
\end{figure}

\begin{figure}[htbp]
\framebox{\includegraphics[width=.8\textwidth,trim=25 35 25 10,clip]{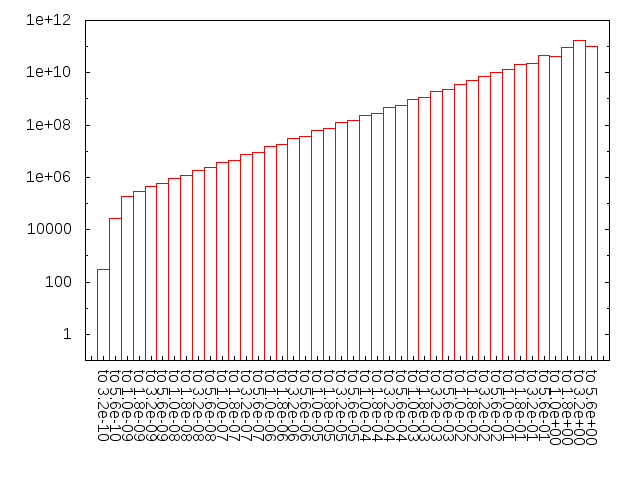}}
\caption{Histogram on the mutual distance between pairs of periodic points of period $20$ for $z\mapsto z^2+2$. (Note that the scale is doubly logarithmic.) This histogram suggests that the Hausdorff dimension of the Julia set is $\delta\approx0.6$: balls of radius $r$ should then have size $r^\delta$, and the periodic points they contain should be proportional to $r^\delta$; so $\delta$ should be the slope of the histogram. }
\label{Fig:Quad_2_dist_histogram}
\end{figure}

We again performed the Viete tests on these roots. The sum of all roots should be $0$, the coefficient of the degree $d-1$ term of our polynomial. Numerically, the sum was $8.08\cdot 10^{-13}-2.67\cdot 10^{-14}i$ of absolute value $8.08\cdot 10^{-13}$. For comparison, the stopping criterion was $|z-N_p(z)|<10^{-16}$, with a guarantee that a root was within distance $d\cdot 10^{-16} \approx 1.05 \cdot 10^{-10}$ of $z$. The accuracy of the Viete test was thus much better than the guaranteed accuracy of single roots (of course, the achieved accuracy of the roots was much better). The product of the roots should equal the constant term, and in this case the numerics diverged out of numerical bounds, independent of the order of the product --- as it should, because the constant term is greater than $2^{2^{19}}$.

A few observations on these results may be interesting. First observe that, since $p(z)=p(-z)$, the Julia set is symmetric under $z\mapsto -z$, but the periodic points are not: no two points $z$ and $-z$ can ever both be periodic. Still, it turns out that the global symmetry of the Julia set yields an approximate symmetry of periodic points and of the Newton dynamics, which results in the fact that the green/red curve in Figure~\ref{Fig:PeriodicPoints c=2 statistics} shows an interesting oscillation with always two cycles per power of two (powers of two are marked by vertical black lines). The closer the circle of starting points is to the Julia set, the fewer iterations are required: so the oscillation shows the number of iterations until the ``interesting'' domain containing the roots are encountered, which is much of the entire iteration effort. Within any two consecutive powers of two (describing the number of the starting points), new starting points are equidistributed on the unit circle, starting at angle $0$. Figure~\ref{Fig:PeriodicPoints c=2} shows that near full and half turns, the most numbers of iterations are required (the boundary circle is dark blue), and this yields the maxima in Figure~\ref{Fig:PeriodicPoints c=2 statistics}; the minima are near $1/4$ and $3/4$ (here the circle is light green). The finer symmetry of the picture explains local maxima near $1/4$ and $3/4$. The smooth oscillation of the curve is the reason why the number of required iterations, as a function of angle, is almost constant near the maxima and minima: this explains why  the histogram in Figure~\ref{Fig:PeriodicPoints c=2 histogram} has pronounced maxima at the very left and right ends. 

The absolute lower bound on the required number of iterations is explained by the fact that a definite amount of iterations is required to reach the Julia set and its periodic points. The absolute upper bound may be more surprising: of course, every circle around the Julia set must contain points with arbitrarily slow convergence to the roots, so we would have expected a few starting points that need very many iterations. However, such starting points seem to be very near the boundary of the basins and are thus not encountered in practice, or very rarely (some are seen in Figure~\ref{Fig:MandelStatistics}).

The same experiment was also performed for finding all periodic points of period $n=20$ for $p(z)=z^2+i$. The results are quite similar, except that the ratio between minimal and maximal distances from the starting points to the roots are less pronounced, so the oscillations of the required numbers of iterations are smaller: they range from $567\,749$ to $846\,739$; a total of $2.773d^2$ iterations were required by $4d$ equidistributed starting points to find all roots. The numerical results are shown in Figure~\ref{Fig:PeriodicPoints c=i statistics19}.

\begin{figure}[htbp]
\centerline{\includegraphics[width=.8\textwidth]{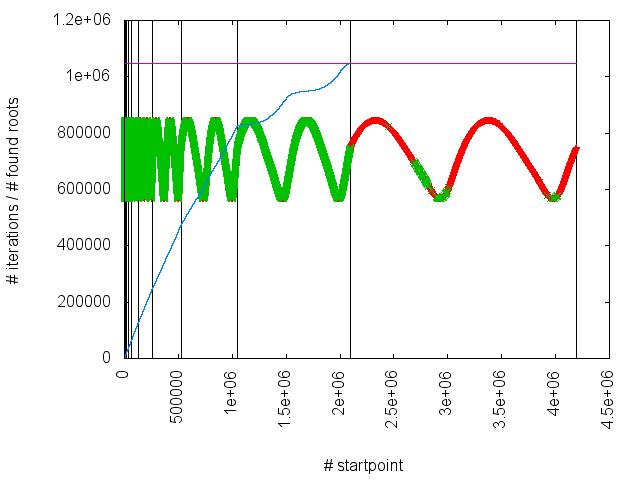}}
\caption{Finding periodic points of period $n=20$ for $p_i(z)=z^2+i$. The required number of iterations again oscillates twice between adjacent powers of two. Blue curve: number of roots found; green crosses: starting points that found roots for the first time; red crosses: starting points that found previously discovered roots. }
\label{Fig:PeriodicPoints c=i statistics19}
\end{figure}

The histogram showing the distribution of required numbers of iterations is given in Figure~\ref{Fig:PeriodicPoints c=i histogram}, and the Julia set containing the periodic points is shown in 
Figure~\ref{Fig:PeriodicPoints c=i} together with the speed of convergence.

\begin{figure}[htbp]
\framebox{\includegraphics[width=.8\textwidth]{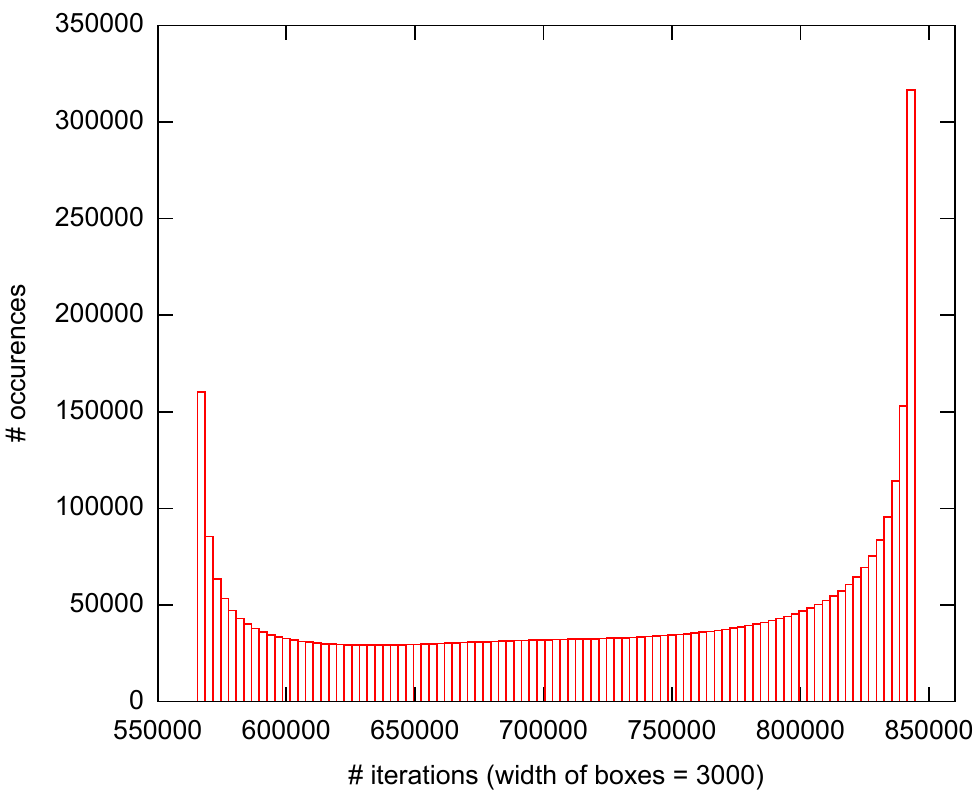}}
\caption{Distribution of number of iterations required for finding periodic points of $z\mapsto z^2+i$ of period $n=20$. }
\label{Fig:PeriodicPoints c=i histogram}
\end{figure}
\begin{figure}[htbp]
\framebox{\includegraphics[width=.8\textwidth,trim=25 35 25 10,clip]{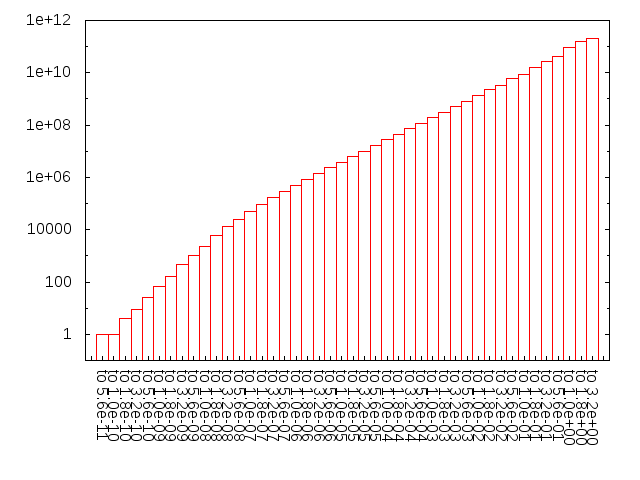}}
\caption{Histogram on the mutual distance between pairs of periodic points of period $20$ for $z\mapsto z^2+i$. (Scale again doubly logarithmic.) In this case, the dimension of the Julia set, which is again the slope of the histogram, can be estimated to be somewhat greater than $\delta\approx 1.0$.}
\label{Fig:Quad_i_dist_histogram}
\end{figure}

Again, we found all $1\,048\,576$ roots; the minimal distance between any two of them was $5.47\cdot  10^{-11}$. In this experiment, we used $\eps_\text{success}=10^{-16}$, that means every orbit was iterated until $|z_n-N_p(z_n)|<\eps_\text{success}$ (no orbit failed to do so within the allowed number of iterations), and thus we know from our worst-case bounds that there is some root $\alpha$ with $|z_n-\alpha|< d\eps_\text{success}<2^{20}\cdot 10^{-16}<1.05\cdot 10^{-10}$. This is not quite sufficient to guarantee that all $d$ disks are disjoint, but the bound on the disk containing the nearest root assumes nothing is known about the locations of the root, and the worst case is when all roots are at equal distance. We know the approximate locations of most of the roots, and this reduces the size of the disks containing the roots by a factor of at least $10^3$ (compare Lemma~\ref{Lem:ImprovedBoundRoot}). We can thus be sure again that all roots have been found, and no root has been accounted for twice. The statistics for the mutual distances between pairs of roots are given in Figure~\ref{Fig:Quad_i_dist_histogram}.

Once more, we performed Viete's test: the sum of all roots should be zero by Viete's formula (this sum should be the negative of the degree $d-1$ coefficient), and the numerical sum was 
$1.23\cdot 10^{-12}+1.53\cdot 10^{-12}i$ of absolute value $1.96\cdot 10^{-12}$, much smaller than the mutual distance between all roots found. 
Similarly, the product of all roots should be the constant coefficient, which in our case is $-1+i$. Numerically, we obtained $-1+i- 5.02\cdot 10^{-14}+5.91\cdot 10^{-13}i$ with error of absolute value $5.93\cdot 10^{-13}$ (again, we had to order the roots to be multiplied so that the partial products remained within reasonable bounds). The roots found thus passed the two Viete tests as well.

\begin{figure}[htbp]
\framebox{
\includegraphics[width=.65\textwidth,trim=50 5 15 50,clip]{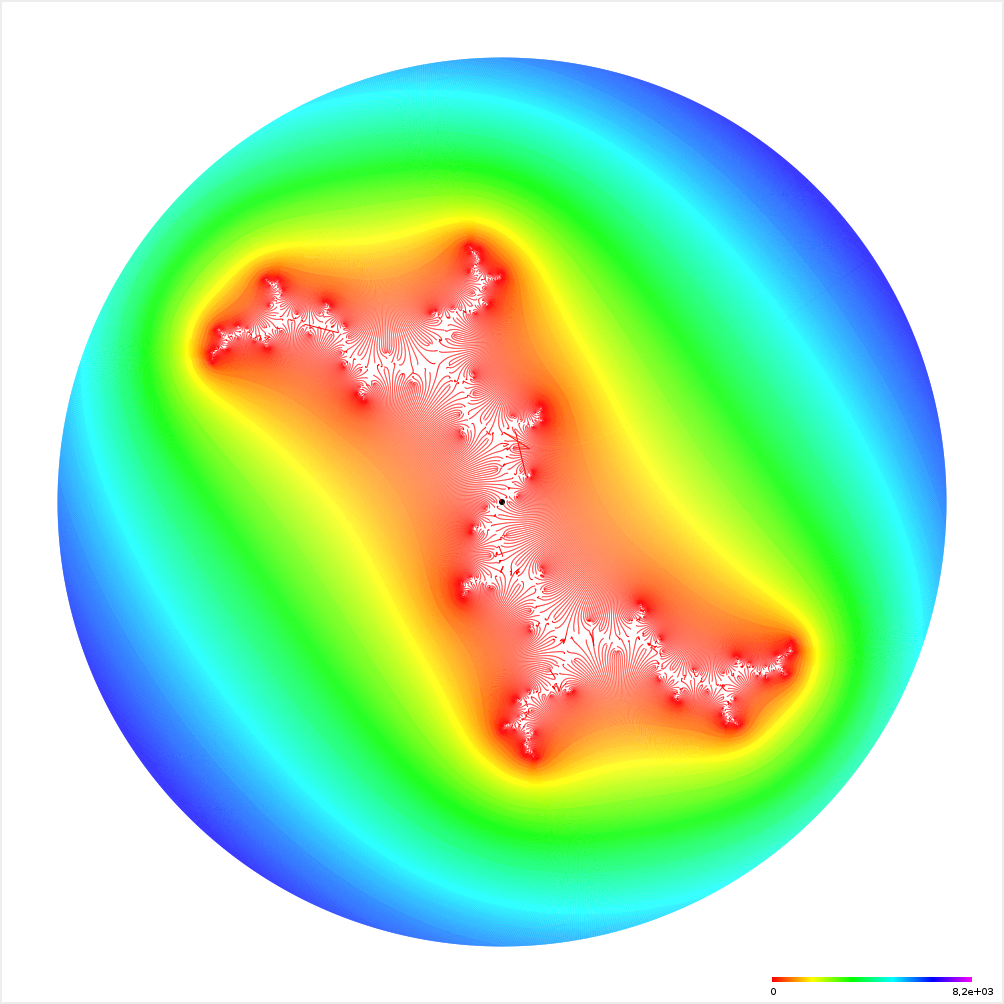}
}
\caption{Speed diagram for finding periodic points of period $n=13$ for $p(z)=z^2+i$. Colors indicate the  number of iterations.}
\label{Fig:PeriodicPoints c=i}
\end{figure}

\subsection{Random polynomial}
Our final experiment was performed with a random polynomial given by a concatenation of quadratic polynomials $z\mapsto z^2+c_i$ with random coefficients $c_i$. Our experiments were performed for period $n=20$, hence degree $2^{20}=1\,048\,576$. All roots were found again for this polynomial, the required number of starting points was $2^{23}=8d$ and the required number of iterations ranged between $250\,853 \approx .24d$ and $925\,568\approx .88d$, with an average of $726\,824\approx .693d$. The total number of iterations required was around $5.545d^2$.

The pictures (shown in Figures~\ref{Fig:PeriodicPoints random statistics}, \ref{Fig:PeriodicPoints random histogram}, \ref{Fig:Random_dist_histogram} and \ref{Fig:PeriodicPoints random}) are very similar again, including the two oscillations per period of two (due to the fact that the first map in our random composition is a quadratic polynomial without linear term, so we again have $z\mapsto -z$-symmetry). In this case, the number of starting points is somewhat higher: 
the first $4d$ starting points found a total of $1 \,046\, 450$ roots, but for the remaining $2\,126$ roots we had to use the total of $8d$ starting points. This calls for a heuristics on where to start in order to locate the remaining few roots, depending on the results of the first $4d$ starting points.

\begin{figure}[htbp]
\centerline{\framebox{\includegraphics[width=.8\textwidth,trim=5 5 20 10,clip]{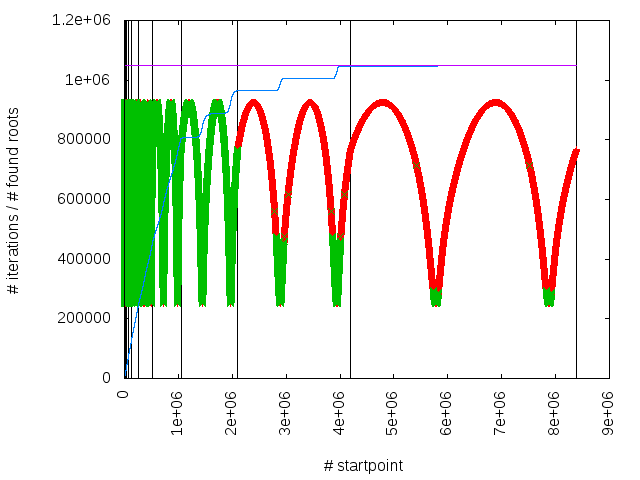}}}
\caption{Finding periodic points for a random polynomial of degree $2^{20}$. The required number of iterations again oscillates twice between adjacent powers of two. By far the most roots are found by $4d$ starting points (the blue curve), but a total of $8d$ starting points was required to find the remaining $2\,126$ roots. (Orbits that were the first to discover any given root are colored green, the others red.)}
\label{Fig:PeriodicPoints random statistics}
\end{figure}

\begin{figure}[htbp]
\framebox{\includegraphics[width=.85\textwidth]{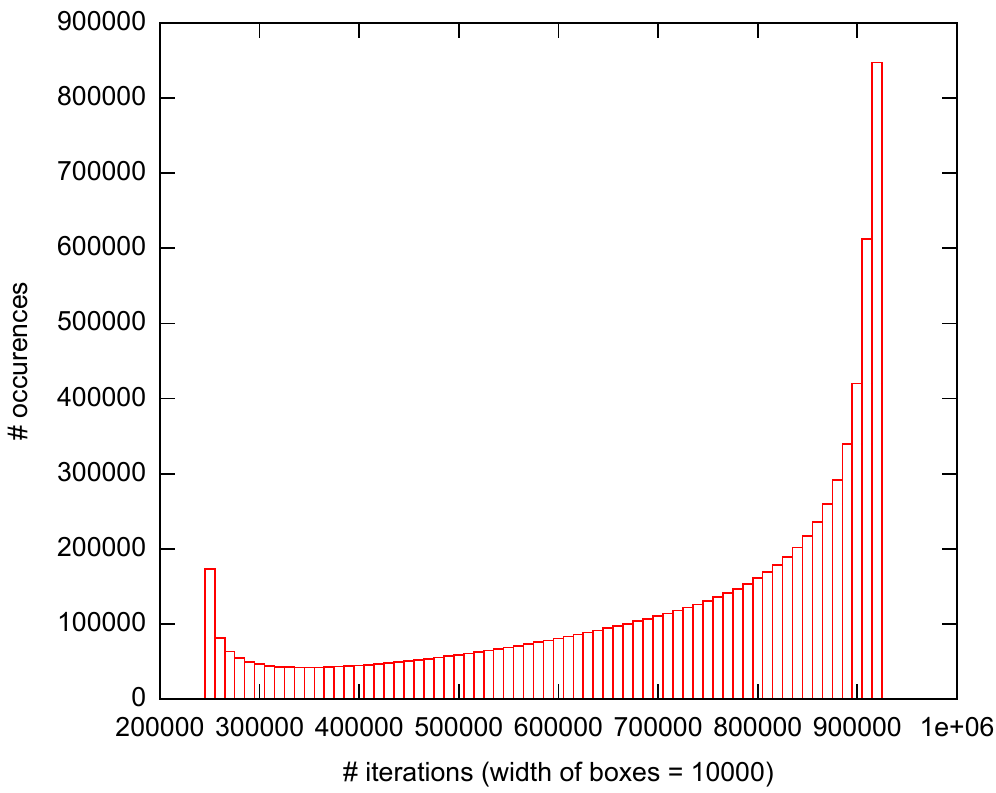}}
\caption{Distribution of the number of iterations required for finding roots of a random polynomial of degree $2^{20}=1\,048\,576$. }
\label{Fig:PeriodicPoints random histogram}
\end{figure}

\begin{figure}[htbp]
\framebox{\includegraphics[width=.85\textwidth,trim=25 35 25 10,clip]{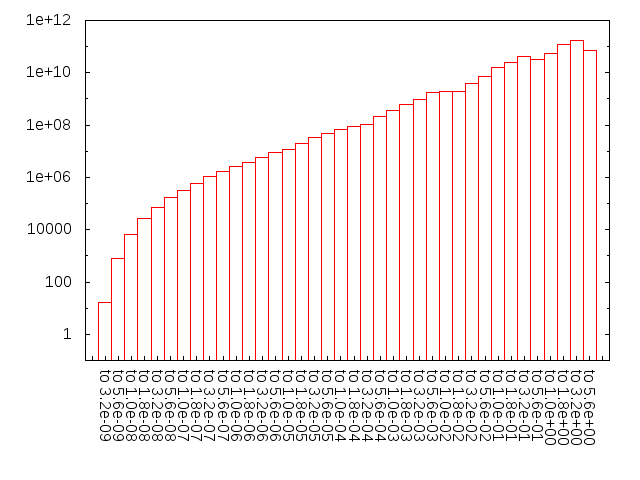}}
\caption{Histogram on the mutual distance between pairs of roots for a random polynomial of degree $2^{20}$. (Scale again doubly logarithmic.) }
\label{Fig:Random_dist_histogram}
\end{figure}

\begin{figure}[htbp]
\framebox{
\includegraphics[width=.52\textwidth,trim=50 20 50 50,clip]{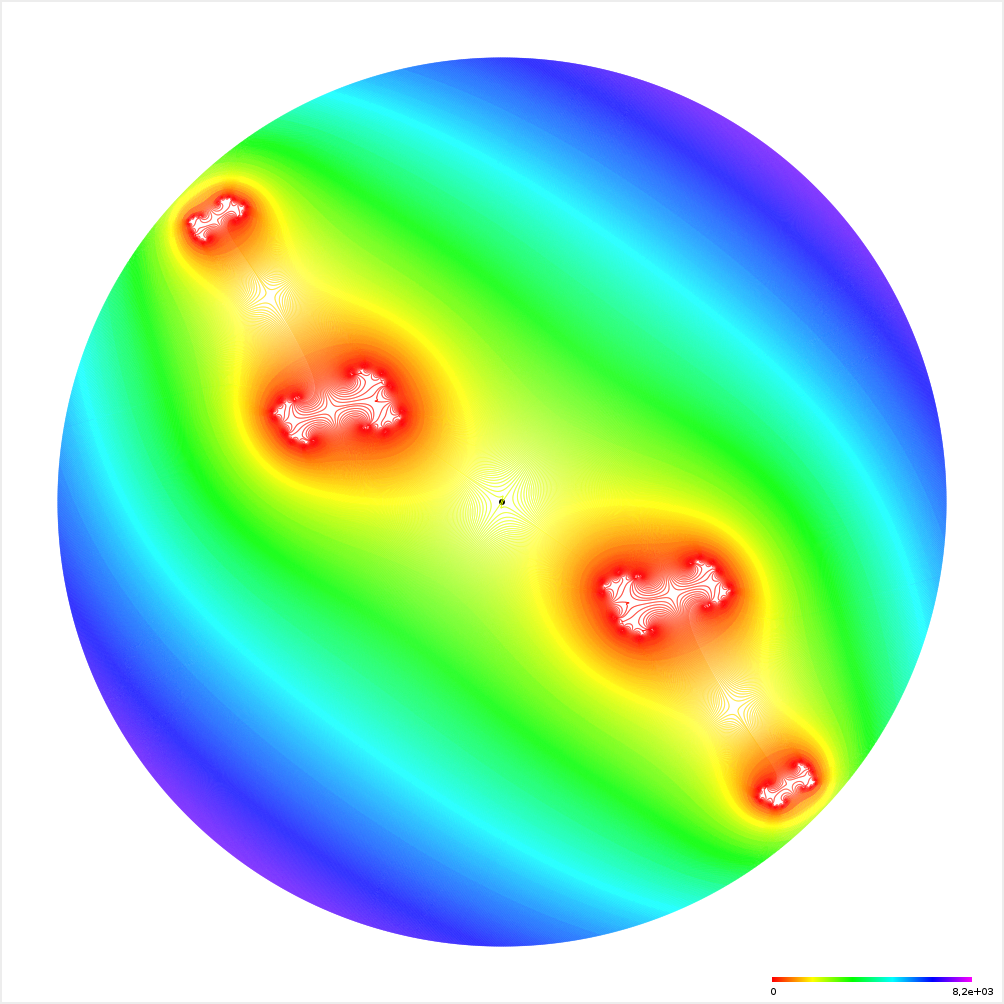}
}\caption{Speed diagram for finding roots of a random polynomial of degree $2^{13}$. Colors indicate again the required number of iterations.}
\label{Fig:PeriodicPoints random}
\end{figure}

The Viete test for the sum of the roots yields $1.51\cdot 10^{-12}+1.48\cdot 10^{-13}i$, very close to the theoretical value $0$. The product was outside of numerical bounds, indicating that the constant coefficient should be gigantic. We do not have its precise value, but the constant coefficient of the concatenation of the first $10$ (of $19$) quadratic polynomials has absolute value greater than $4\cdot 10^{97}$, and from 16 concatenations on it exceeds the allowed values of the \texttt{long double} data type.

\goodbreak 
\subsection{Some Statistics}
\label{Sub:Statistics} In this table, we collect some of the most relevant data from some of our experiments for degrees $d=2^{19}$ and $d=2^{20}$.
\nopagebreak
\medskip

\noindent
\begin{tabular}{|lllrlrrrrr|}
\hline 
Experiment & $d$ & stp & fail & \#iter &  iter max & avg &  total & $2d$ & $4d$ \\
\hline
Mandel Center & $2^{20}$ & $4d$ & 430 & $.56-.81$ & 2.88 & .695 & 2.78 & 3\,374 & 0\\
periodic $z^2+2$ & $2^{19}$ & $4d$ & 0 & $.43-.91$ & .91 & .693 & 2.77 & 9\,560  & 0 \\
periodic $z^2+2$ & $2^{20}$ & $4d$ & 0  & $.43-.91$ & .91 & .693 & $2.77$ & 19\,012 & 0\\
periodic $z^2+i$ & $2^{19}$ & $4d$ & 0 & $.54-.81$ & .81 & .693 & 2.77 & 415  & 0  \\
periodic $z^2+i$ & $2^{20}$ & $4d$ & 0 & $.54-.81$ & .81 & .693 & 2.77 & 771 & 0 \\
random & $2^{19}$ & $8d$ & 0 & $.24-.88$ & .88 & .693 & 5.55 & 43\,160 & 1\,014 \\
random & $2^{20}$ & $8d$ & 0 & $.24-.88$ & .88 & .693 & 5.55 & 82\,997 & 2\,126\\
\hline

\end{tabular}

\medskip

\textbf{Explanation:} \\
\begin{tabular}{|r|l|}
\hline
$d$ & degree of the polynomial \\ \hline
stp & number of starting points required to find all roots \\ \hline
fail & number of those that did not converge to any root \\\hline
\# iter& number of iterations required to find individual roots,  \\
& for most starting points (times $d$) \\ \hline
iter max &  largest number of iterations required to find single root (times $d$) \\ \hline
avg & average number of iterations required to find root (times $d$)\\ 
& (only for those starting points that found a new root)\\ \hline
total & total number of iterations required to find all roots (times $d^2$) \\ \hline
$2d$ & roots missing after $2d$ starting points \\ \hline
$4d$ & roots missing after $4d$ starting points \\ & (all roots were found by $8d$ starting points) \\
\hline
\end{tabular}
\smallskip

The table above confirms the results given in the respective text sections: $4d$ equidistant starting points find all roots in most cases (in one case, $0.2\%$ of the roots were missed), but all were found by $8d$ equidistant starting points.  

Only in one case did we find attracting cycles of period $2$ or greater, and all starting points that failed to find a root converged to one of these. The number of starting points affected was less than $0.05\%$. However, if the maximal number of allowed iterations $M_\text{fail}$ is large, then few such orbits may dominate the numerical effort, and it thus makes sense to check for periodic orbits when a particular Newton orbit has not found a root after about $d$ iterations. 

The number of iterations to find each individual root was between $0.24d$ and $0.91 d$, and only in one experiment did we encounter about 70 starting points (out of more than $10^6$) that required significantly more iterations. 

\medskip
\emph{A Bit of Numerology. }
An interesting empirical observation in our table above is that for most polynomials, the total number of iterations required to find all roots was $2.77d^2$ when $4d$ starting points sufficed to find all roots, and $5.55d^2$ when $8d$ starting points were required. The average number of iterations required for all starting points to converge to a root was thus approximately $0.69$ in all cases; this average was observed with good precision also for the first experiment on the centers of hyperbolic components of the Mandelbrot set, when those orbits that failed to find any root were not taken into the average. An interesting observation is that $0.69\approx \log 2$, where $2$ is the radius of the circle containing all our starting points and all out polynomials were normalized (the leading coefficient is $1$) and centered (the second coefficient is $0$, so that the sum of all roots equals $0$). If this was true, then an easy estimate would imply that starting on a circle of radius $r>2$, the average number of iterations would be $\log r$. We were surprised about the consistency of this numerical observation and wonder in which generality it will hold, especially since the individual number of iterations per orbit varied significantly. Perhaps the average number is $\log (r/\rho)d$, where $r$ is the radius of the circle containing the starting points and $\rho$ is a quantity describing the ``size'' of the root locus, such as the conformal radius of Mandelbrot or Julia sets or the fact that our polynomials are normalized (so that their leading coefficient is $1$).

\subsection{Random Starting Points}

The advantage of our theoretical approach to Newton's method is that we have very good estimates on where to start the Newton iteration: on a circle surrounding all the roots as we did in this study (or possibly on several circles as in \cite{HSS} or on an annulus as in \cite{BLS}). These estimates have unconditional proofs and are near-optimal. Our experiments show that among distributions of starting points within an annulus around all roots, it is sufficient to distribute the starting points evenly on a single circle: all the worst case estimates that indicate why several circles or random points on an annulus might be better only yield improvements when there are roots with very many channels to $\infty$, and these seem to be rare cases that were not encountered in our study. Therefore, we decided to place all starting points onto a single circle around the roots. 

However, our experiments show that much of the iteration time is spent to just overcome the distance between the starting points and the domain where the roots are located, before the Newton dynamics starts to be interesting. It is thus tempting to distribute the starting points randomly in the disk containing all the roots. We made a number of experiments in this direction. The outcome is illustrated for instance in Figure~\ref{Fig:Dendrite_RandomStartingPoints} for finding periodic points of $z\mapsto z^2+i$. It turns out that, while there are many starting points that converge to roots much faster, the number of starting points required is so much larger that the total effort is significantly greater: for the Newton dynamics, the fundamental trade-off between an efficient selection of starting points and fast iteration seems to favor good choices of starting points as in our work: the required number of starting points of just a small constant times the degree is obviously hard to beat, with the number of iterations an even smaller constant times the degree.

\begin{figure}[htbp]
\framebox{\includegraphics[width=.8\textwidth,trim=5 5 8 5,clip]{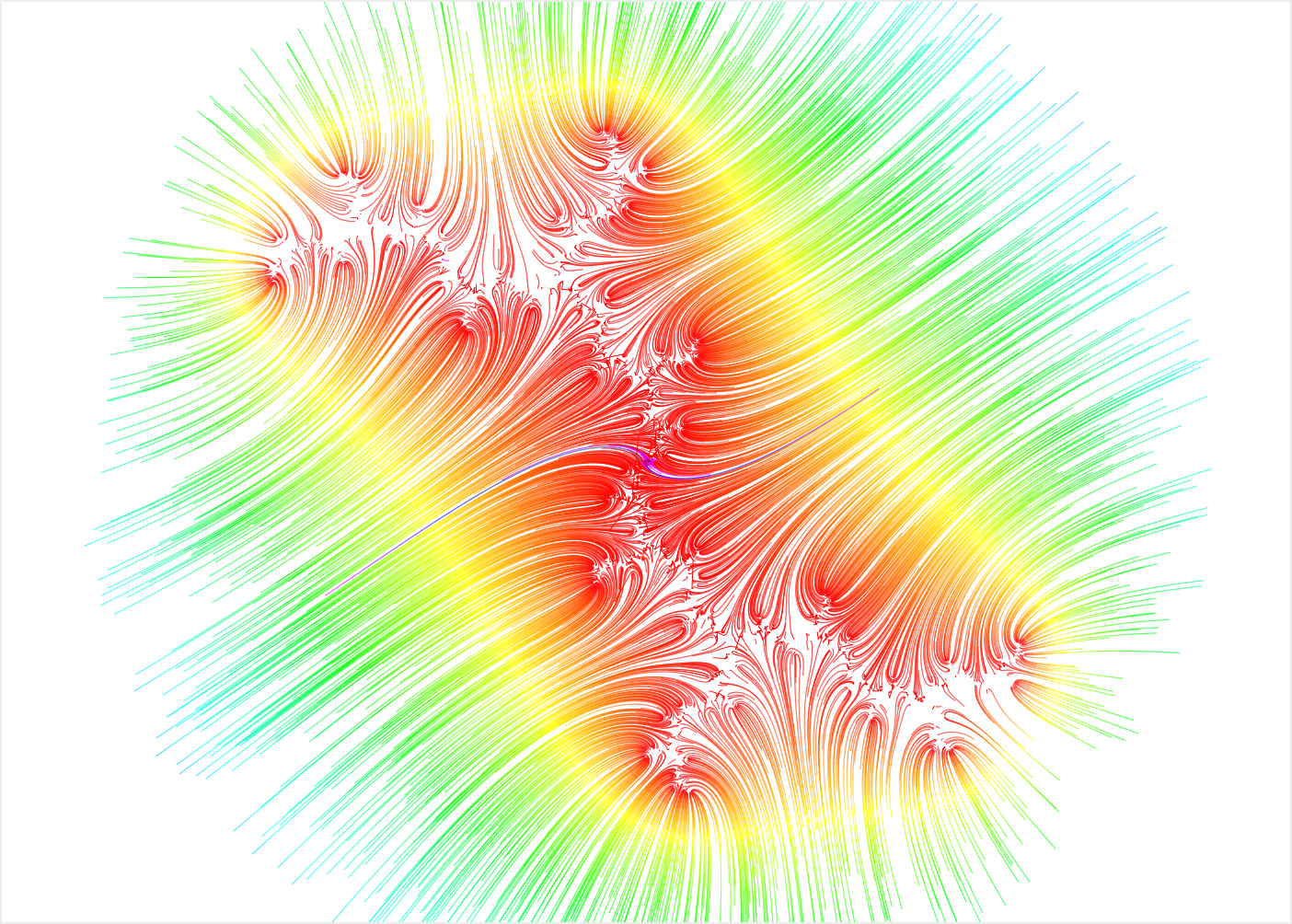}}
\caption{Finding periodic points of period $n=13$ of $z\mapsto z^2+i$. The starting points are randomly chosen in $\disk$, the trajectories of the Newton orbits are drawn in. Colors indicate the number of iterations required to approximate the roots. Observe that virtually all starting points converge to some root at uniform speed (however, there are two orbits that land near the center of the picture and take a lot of time from there).}
\label{Fig:Dendrite_RandomStartingPoints}
\end{figure}

\subsection{Computing Time}

For a number of simulations, we also logged the computing time. Initial experiments (centers of hyperbolic components of the Mandelbrot set of period $21$, degree $2^{20}$) took about three weeks. After substantial improvements later on, the same experiment took $67\,744$ seconds or $18.8$ hours (in both cases, exploiting the symmetry). 
These were all on our own home computer; other experiments on university computers had of course different run times. Between the different experiments, the algorithm was improved in various ways, so the computing times of most experiments are not systematically comparable. We experimented briefly with different computing accuracy: doing most of the iterations with lower precision data formats and improving the accuracy later own. Compared with \texttt{long double}, the data type \texttt{double} saved about 20\%--25\% computing time, and simple \texttt{float} did not save additional time. These time gains by reduced accuracy were minor compared to our algorithmic improvements, so we ended up computing in \texttt{long double} all the time. One of the most drastic improvements in speed, however, was achieved by the option to have the code compiled efficiently: that slashed the computing time by a factor of about 10 or more (for degree one million, from about 350 hours to 37 hours)! 

In any case, the current situation is that our algorithm easily finds all roots of polynomials of degree one million, so the main issue whether the algorithm terminates successfully can be answered affirmatively. Our future efforts will go into analyzing and improving the performance; this is ongoing research. So far, we compared our algorithm with the software package MPSolve~3.0 for one family of polynomials that come as sample features of MPSolve: these are the polynomials that describe centers of hyperbolic components of the Mandelbrot set as investigated above. Initial tests indicate that the speed is roughly comparable, even though we employ the classical Newton method without fine tuning.

One current project is based on the observation that much of the computational effort is spent on iteration far from the roots, where nearby orbits have nearby dynamics, so that fewer orbits should suffice initially provided they get refined as the iteration proceeds. An implementation of this idea led to substantial improvements of the computational effort; these will be described separately.

\section{Conclusion}
\label{Sec:Conclusion}

We demonstrate that for univariate polynomials of degree up to and exceeding $10^6$, all roots can be found numerically by Newton's method without having to use deflation or any special algorithms or techniques, and on ordinary personal computers. The total number of Newton iterations required, while theoretically bounded above by $O(d^2\log^4d)$, turns out to be very nearly $4 (\ln 2) d^2\approx 2.77d^2$ for our dynamically motivated polynomials, and $8(\ln 2)d^2\approx 5.55 d^2$ for the randomly chosen polynomial (our choice of starting points requires a total number of at least $O(d^2)$ iterations as explained at the end of Section~\ref{Sub:StoppingCriterion}, so we are very near the theoretical minimum). 

We have good theoretical bounds that a large fraction of starting points on any circle surrounding all roots will converge to some root within the immediate basin, many of them within reasonable speed, but it turns out that by far the most of the tested starting points on the circle would converge to a root (presumably not all within immediate basins). Moreover, even though the number of iterations required to find the roots must be unbounded for starting points on each such circle, in practice by far the most starting points that were tested converged essentially fastest possible (within a factor of $2-4$  from the lower bound of Section~\ref{Sub:StoppingCriterion});  and the number of starting points tested was many millions in total. While the number of required iterations must be unbounded near the boundary of the basins, the results suggest that there might be a provable small upper bound on the area near the boundary where the convergence is slow. 

The algorithm that we provide and prove is thus confirmed to work in practice very stably, with the best possible efficiency and for polynomials of large degrees. The roots were found with an a-posteriori guarantee that no roots are missing.

We had been curious whether we would numerically discover attracting cycles of higher period for the Newton dynamics; alas, we found them only for one polynomial (and only from very few starting points) 
--- which is good for root finding, but less exciting from the dynamical systems point of view. In closing, we might mention that there is an ambundance of polynomials for which Newton's method has attracting cycles of high periods. A classification of those had been asked for by Smale \cite{Smale2}, and it has recently been provided in \cite{NewtonFixed,NewtonRussell1,NewtonRussell2}.


\begin{thebibliography}{McN1}

\bibitem[Ar]{Arnold}

Ludwig Arnold, 
\emph{\"Uber die Nullstellenverteilung zuf\"alliger Polynome} (On the distribution of roots of random polynomials).
Math. Z. \textbf{92} (1966), 12--18. 

\bibitem[BAS]{BAS}
Todor Bilarev, Magnus Aspenberg, and Dierk Schleicher, 
\emph{On the speed of convergence of Newton's method for complex polynomials}. Mathematics of Computation \textbf{85} 298 (2016), 693--705.

\bibitem[BR14]{BiniRobol} Dario A. Bini, Leonardo Robol, 
\emph{Solving secular and polynomial equations: a multiprecision algorithm}. Journal of Computational and Applied Mathematics \textbf{272} 276--292 (2014).



\bibitem[BLS]{BLS}
B\'ela Bollob\'as, Malte Lackmann, and Dierk Schleicher, \emph{A small probabilistic universal set of starting points for finding roots of complex polynomials by Newton's method}. {Mathematics of Computation} \textbf{82} (2013), 443--457.

\bibitem[BS]{BrinStuck}
Michael Brin and Garreth Stuck,
\emph{Introduction to Dynamical Systems}.
Cambridge University Press, Cambridge, UK, 2002.



\bibitem[Bo]{Bousch}
Thierry Bousch, 
\emph{Sur quelques probl\`emes de la dynamique holomorphe},
Ph.D. Thesis, Universit\'e de Paris-Sud (1992).
\bibitem[DH]{Orsay}
Adrien Douady and John Hubbard, 
\emph{\'Etudes dynamique des polyn\^omes complexes
I \& II},
Publ. Math. Orsay. (1984-85) {\em(The Orsay notes)}.

\bibitem[ET]{ErdoesTuran}
Paul Erd\H{o}s, P\'al Tur\'an, 
\emph{On the distribution of roots of polynomials}.
Ann. of Math. (2) \textbf{51} (1950), 105--119. 

\bibitem[F]{Fortune}
Steve Fortune, \emph{An Iterated Eigenvalue Algorithm for Approximating Roots of Univariate Polynomials}, J. of Symbolic Computation \textbf{33} 5 (2002), 627--646. 

\bibitem[GLSY]{GLSY}
Marc Giusti, Gr\'egoire Lecerf, Bruno Salvy, and Jean-Claude Yakoubsohn, \emph{Location and approximation of clusters of zeros of analytic functions}. Foundations of Computational Mathematics \textbf{5} (2005), 257--311.


\bibitem[HSS]{HSS}
John Hubbard, Dierk Schleicher, and Scott Sutherland, 
\emph{How to find all roots of complex polynomials with Newton's
method}.
Inventiones Mathematicae {\bf 146} (2001), 1--33. 

\bibitem[LF]{LindseyFox}
J. Pat Lindsey and James W. Fox, \emph{A method of factoring long $z$-transform polynomials}, Computational Methods in Geosciences, SIAM, 1992, 78--90. (See also \texttt{http://en.wikipedia.org/wiki/Lindsey-Fox algo\-rithm}).

\bibitem[LMS1]{NewtonRussell1}
Russell Lodge, Yauhen Mikulich, and Dierk Schleicher, \emph{Combinatorial properties of Newton maps}. Preprint, arXiv:1510.02761.

\bibitem[LMS2]{NewtonRussell2}
Russell Lodge, Yauhen Mikulich, and Dierk Schleicher, \emph{A classification of postcritically finite Newton maps}. Preprint, arXiv:1510.02771.


\bibitem[L]{LyubichMeasMaxEntropy}
Mikhail Lyubich, 
\emph{Entropy properties of rational endomorphisms of the Riemann sphere}.
Ergod. Th. \& Dynam. Sys. {\bf 3} (1983), 351--385.


\bibitem[MMS]{NewtonAlgorithm}
Khudoyor Mamayusupov, Sabyasachi Mukherjee, and Dierk Schleicher, 
\emph{Turning Newton's Method into an Algorithm with Predictable Complexity}.
Manuscript, in preparation.

\bibitem[MRS]{NewtonFixed} Yauhen Mikulich, Johannes R\"uckert, and Dierk Schleicher, \emph{A combinatorial classification of postcritically fixed Newton maps}. Preprint, arXiv:1010.5280. 

\bibitem[McN1]{McNamee} John M. McNamee, \emph{A 2002 update of the supplementary bibliography on roots of polynomials}. J. Comput. Appl. Math. \textbf{142} 2 (2002), 433--434.

\bibitem[McN2]{McNameeBook}
John M. McNamee, \emph{Numerical methods for roots of polynomials. Part I. Studies in Computational Mathematics} \textbf{14}. Elsevier B. V., Amsterdam, 2007.

\bibitem[McO]{McNameeOlhovsky}
John M. McNamee, Olhovsky, \emph{A comparison of a priori bounds on (real or complex) roots of polynomials.} Proceedings of the 37th IMACS conference, Paris (2005). 

\bibitem[P1]{Pan}
Victor Pan, 
\emph{Approximating Complex Polynomial Zeros: Modified Weyl's
Quadtree Construction and Improved Newton's Iteration}. 
Journal of Complexity \textbf{16} (2000), 213--264.

\bibitem[P2]{Pan02} 
Victor Pan, 
\emph{Univariate Polynomials: Nearly Optimal Algorithms for Factorization and Rootfinding}, Journal of Symbolic Computations \textbf{33} 5, 701--733 (2002).



\bibitem[R]{Renegar}
James Renegar,
\emph{On the worst-case arithmetic complexity of approximating zeros of polynomials}.
J. Complexity \textbf{3} 2 (1987), 90--113. 



\bibitem[S1]{NewtonIterations}
Dierk Schleicher, \emph{On the number of iterations of Newton's method for
complex polynomials}.
{Ergodic Theory and Dynamical Systems} {\bf 22} (2002),
935--945. 



\bibitem[S2]{MandelStruct}
Dierk Schleicher, 
\emph{On fibers and local connectivity of Mandelbrot and Multibrot
sets}. In: M.~Lapidus, M.~van Frankenhuysen (eds): {\em
Fractal Geometry and Applications: A Jubilee of Benoit Mandelbrot}.
Proceedings of Symposia in Pure Mathematics {\bf 72}, 
American Mathematical Society (2004), 477--507.

\bibitem[S3]{IntAddr}
Dierk Schleicher, 
\emph{Internal addresses in the Mandelbrot set and Galois groups of polynomials}.
Arnold Mathematical Journal {\bf 3} 1, 1--35 (2017). arXiv:math/9411238.

\bibitem[S4]{NewtonEfficient}
Dierk Schleicher,
\emph{On the Efficient Global Dynamics of Newton's Method for Complex Polynomials}. Manuscript (2013). arXiv:1108.5773.


\bibitem[SSt]{MandelArea}
Dierk Schleicher and Robin Stoll, \emph{The hyperbolic area of the Mandelbrot set: numerical estimates}. Manuscript, in preparation.

\bibitem[Sm1]{Smale2} 
Steven Smale, \emph{On the efficiency of algorithms of analysis}. Bull. Amer. Math. Soc. \textbf{13} 2, 87--121 (1985).

\bibitem[Sm2]{Smale}
Steve Smale, {\em Newton's method estimates from data at one point}. In: The merging of disciplines:
new directions in pure, applied, and computational mathematics (Laramie, Wyo.,
1985), 185--196, Springer, New York, 1986.


\end{thebibliography}
\end{document}